\setlist[enumerate]{leftmargin=.5in}
\setlist[itemize]{leftmargin=.5in}
\crefname{hypothesis}{Hypothesis}{Hypotheses}
\def\a{{\boldsymbol\alpha}}
\def\A{{\mathbf{A}}}
\def\bo{{\mathcal{O}}}
\def\d{{\partial}}
\def\e{{\epsilon}}
\def\E{{\mathcal{E}}}
\def\g{{\gamma}}
\def\J{{\mathscr{J}}}
\def\M{{\mathcal{M}}}
\def\rmc{{\mathcal{R}}}
\def\R{{\mathbb{R}}}
\def\Rn{{\mathbb{R}^{N}}}
\def\s{{\sigma}}
\def\U{{\mathbf{U}}}
\def\v{{\mathbf{v}}}
\def\bv{{\mathbf{V}}}
\def\w{{\omega}}
\def\x{{\mathbf{x}}}
\def\X{{\mathbf{X}}}
\def\z{{\mathbf{z}}}
\def\Z{{\mathbf{Z}}}
\title{Data-driven Closures \& Assimilation for Stiff Multiscale Random Dynamics\thanks{Submitted to the editors December 15, 2023.
\funding{This work was funded by the National Science Foundation Graduate Research Fellowship under Grant No. DGE 2146752. This material is based upon work supported by the U.S. Department of
Energy, Office of Science, Advanced Scientific Computing Research under
Contract DE-AC02-06CH11357. The work reported in this paper has been partly supported by
the U.S. Department of Energy Advanced Grid Modeling Program.}}}
\author{Tyler E. Maltba\thanks{Theoretical Division, Los Alamos National Laboratory, Los Alamos, NM 87545, USA (\email{maltba@lanl.gov}).}
\and Hongli Zhao$^\S$\thanks{Department of Statistics, University of Chicago, Chicago, IL 60637, USA (\email{honglizhaobob@uchicago.edu}).}
\and D. Adrian Maldonado\thanks{Mathematics and Computer Science Division, Argonne National Laboratory, Lemont, IL 60439, USA (\email{bzhao@anl.gov}, \email{maldonadod@anl.gov})}
}
\begin{document}

\maketitle

\begin{abstract}
We introduce a data-driven and physics-informed framework for propagating uncertainty in stiff, multiscale random ordinary differential equations (RODEs) driven by correlated (colored) noise. Unlike systems subjected to Gaussian white noise, a deterministic equation for the joint probability density function (PDF) of RODE state variables does not exist in closed form. Moreover, such an equation would require as many phase-space variables as there are states in the RODE system. To alleviate this curse of dimensionality, we instead derive exact, albeit unclosed, reduced-order PDF (RoPDF) equations for low-dimensional observables/quantities of interest. The unclosed terms take the form of state-dependent conditional expectations, which are directly estimated from data at sparse observation times. However, for systems exhibiting stiff, multiscale dynamics, data sparsity introduces regression discrepancies that compound during RoPDF evolution. This is overcome by introducing a kinetic-like defect term to the RoPDF equation, which is learned by assimilating in sparse, low-fidelity RoPDF estimates. Two assimilation methods are considered, namely nudging and deep neural networks, which are successfully tested against Monte Carlo simulations.
\end{abstract}

\begin{keywords}
Random dynamics, Stiff, Multiscale, Colored noise, PDF equation, Reduced-order, Uncertainty quantification, Data assimilation
\end{keywords}

\begin{AMS}
60H35, 34F05, 82C31, 65C20
\end{AMS}

\section{Introduction}

Randomness is inherent to most, if not all, complex phenomena described by ordinary differential equations (ODEs)---it enters such models in two ways (a) stochastic forcing terms accounting for internally generated or externally imposed “sub-grid” fluctuations (i.e., noise), and (b) probabilistic representations of uncertain coefficients and initial/boundary data. Owing to simplicity of implementation and parallelizablility, multilevel Monte Carlo (MC) simulations \cite{Giles} and its variants (e.g., \cite{TAVERNIERS2020}) remain as common approaches for uncertainty quantification (UQ) of random ODEs (RODEs) and stochastic differential equations (SDEs).
However, MC simulations shed little light on a system's probabilistic dynamics and are burdened by slow convergence rates, requiring significant computational resources. 

The search for efficient alternatives has led to the development of quasi-MC simulations, moment ODEs (MODEs), polynomial chaos expansions (PCEs), Mori-Zwanzig formalism (MZF), and the method of distributions (MoD), each having its strengths and weaknesses. For example, MODEs limit random inputs to be Gaussian or of small variation and are capable of providing only a few statistical moments \cite{bover_1978}, which are usually not sufficient to characterize a system's probabilistic nature. PCEs do give rise to probability density functions (PDFs) by using a finite number of uncorrelated random variables to approximate temporally varying random inputs, e.g., Karhunen-Lo\`eve (KL) expansions. However, they are inappropriate for models whose random sources have short-range correlations \cite{Wang-2013-Probability}. MZF, on the other hand, is a step in the right direction for high-dimensional RODEs by seeking non-Markovian reduced-order Langevin equations for low-dimensional observables/quantities of interest (QoIs). Such equations contain a nonlocal term that requires a closure approximation for its memory kernel. However, neither classical \cite{Chorin_2002} nor data-driven \cite{Fu_2020} approaches to kernel closures are well-suited for stiff, multiscale systems since they require integrating over most, if not all, past dynamics, i.e., the long-memory problem (see, e.g., \cite{Li_2022}). Among other restrictions, MZF also requires all noise inputs to be Gaussian and white \cite{Zhu_2021}.

The aforementioned approaches fall short because they cannot simultaneously tackle high-dimensionality, stiffness, multiple scales, and colored noise. For low-dimensional systems exhibiting these traits, the MoD, comprised of PDF and cumulative distribution function (CDF) methods, has been highly successful via the derivation and learning of closed-form deterministic partial differential equations (PDEs) for joint PDFs/CDFs of system states \cite{maltbaphd,Maltba,Maltba2022,Wang-2013-Probability}. The approach has also been adapted to quantify parametric uncertainty in hyperbolic~\cite[and the references therein]{Tartakovsky-2015-Method} and parabolic~\cite{Boso-2016-method} PDEs. Its major strength relies on random input fields being treated exactly, which is in contrast to implementations based on KL expansions~\cite{Venturi-2012-A}, meaning that, unlike PCEs, the MoD is well-suited for systems with short-range colored noise. In what follows, we limit our study to PDF methods.

The standard MoD approach is infeasible for high-dimensional RODEs since it results in a PDE with as many spatial dimensions as there are states/equations in the system. While there have been advancements in numerical integration of high-dimensional PDEs~\cite{rodgers2021adaptive}, they typically are not well-suited for complex multiscale dynamics of exceptionally large dimension.
Similar to MZF, we instead consider low-dimensional QoIs, albeit directly for their PDF dynamics instead of their Langevin ones, leading to the reduced-order MoD. More precisely, we derive exact, albeit unclosed, reduced-order PDF (RoPDF) equations for low-dimensional QoIs. Unlike MZF, the RODE noise need not be Gaussian nor white, nor does a memory kernel need approximating. 

Unclosed terms in our RoPDF equations take the form of state-dependent conditional expectations, henceforth referred to as regression functions, and are estimated from state data at discrete times. When a QoI has slow dynamics with respect to the observation time intervals, the learned regression functions produce negligible discrepancies, and solutions to the resulting RoPDF equations are accurate, assuming enough data has been injected. When the RODE is at least partially separable with respect to the QoI (in the sense of \cite{BRENNAN2018281}), parts of the regression functions are known analytically. Thus, part of the RoPDF equation is known {\emph{a priori}} and is physics-informed, which is a means for variance reduction. This was studied in \cite{BRENNAN2018281} for ODEs with random initial conditions and subsequently for It\^o SDEs in \cite{MaltbaPES}. 

For systems of stiff, multiscale RODEs, QoIs may vary rapidly over observation windows, making the available state data temporally sparse with respect to the QoI's timescale. In this setting, regression discrepancies amount to model/PDE misspecification and compound during RoPDF evolution, producing inaccurate RoPDFs. Moreover, if state data is synthetically generated via MC simulations, data availability for regression may be limited due to large computational costs associated with the numerical integration of (possibly high-dimensional) stiff RODEs and necessarily coarse time steps. To overcome these challenges, we introduce an {\emph{a priori}} unknown source term to the RoPDF equation for capturing model defects. It is inferred by post-processing the limited QoI data via fast, robust kernel density estimation (KDE) to form low-fidelity, temporally sparse RoPDF estimates, which are then assimilated into the RoPDF equation. We give a head-to-head comparison of two assimilation procedures: nudging (a.k.a., Newtonian relaxation (NR)) and deep neural networks (DNNs). The former dynamically steers the RoPDF equation solution (a.k.a, the observer) towards the RoPDF estimates via a tuned (finite) relaxation rate. The latter, however, can be interpreted as instantaneous, albeit not dynamic, relaxation.  

The paper is organized as follows. We introduce RODEs and derive deterministic PDEs for their RoPDFs in \Cref{sec:main}, and \Cref{sec:assim} discusses the data-assimilation procedures nudging and DNNs used for RoPDF inference. Details on numerics, training, and computational complexity are given in \Cref{sec:num}. Experimental results are presented in \Cref{sec:experiments} for a stiff linear system and a power grid model of transmission failures, both driven by Ornstein-Uhlenbeck (OU) noise. Concluding remarks and future directions are summarized in \Cref{sec:conclusions}.


\section{Reduced-order Method of Distributions}
\label{sec:main} Consider the RODE system
\begin{equation}  \label{ivp2}
\frac{d\x(t)}{dt} = \v\big(\x(t,\w),t;\boldsymbol\xi(t,\w)\big), \quad\quad \x(0,\w) = \x^0(\w),
\end{equation} 
to be solved on a time interval $(0, T_f]$ and holds for almost every $\omega \in \Omega$, where $(\Omega, \mathscr{F}, \mathbb{P})$ is an appropriate probability space. The solution $\x(t,\w):[0,T_f]\times \Omega \to \R^N$ is an $\mathbb R^N$-valued stochastic process with the initial state $\x^0$, defined as an $N$-dimensional random vector with joint PDF $f_{\x^0}(\X) : \Rn \to \R^+$. The phase space for \cref{ivp2} is taken as $\R^N$ for notational convenience; however, this can be altered, with little effect on the arguments below, to account for almost surely bounded processes. The given deterministic function $\v = [ v_1, \dots, v_N]^\top: \Rn\times[0,T_f] \to \Rn$, parameterized with  a set of $N_p$ random coefficients $\boldsymbol \xi(t,\w) = [\xi_1(t,\w),\dots,\xi_{N_p}(t,\w)]^\top$, satisfies conditions guaranteeing the existence of a unique pathwise solution $\x(t,\w)$ (see \cite{Han2017}). We assume without loss of generality that the random processes $\boldsymbol \xi(t,\w)$ are zero-mean and characterized by a prescribed single-time joint PDF $f_{\boldsymbol\xi}(\boldsymbol\Xi;t)$. We frequently use the shorthand $v\big(\x(t),t;\w\big)$ for the right-hand side of \cref{ivp2}, use $\mathbb E[\cdot]$ and $\langle \cdot \rangle$ interchangeably to denote the ensemble mean, and omit $\w$ in our notation when possible.

\begin{remark}
The paths of $\boldsymbol\xi$ are Lebesgue measurable, almost surely bounded, and at most H{\"o}lder continuous on $[0,T_f]$ so that \cref{ivp2} can be interpreted in the sense of Carath\'eodory. Therefore, the paths of $\x$ are continuously differentiable with derivatives that are at most H{\"o}lder continuous \cite{Han2017}.
\end{remark}

Let $\X = [X_1,\dots, X_N]^\top$ denote a phase-space variable in $\R^N$. For any fixed $t>0$, the system is (partially) characterized by the single-time joint CDF $F_\x(\X;t) \triangleq \mathbb{P}[\x(t) \le \X]$. If $F_\x(\X;t)$ is differentiable with respect to all components $X_i$, the system is equivalently characterized by the single-time joint PDF $f_\x(\X;t)$. When the state dimension $N$ is large, deriving or learning a PDF equation for $f_\x(\X;t)$ is intractable since the result would be an $N$-dimensional PDE. We instead consider a low-dimensional QoI $\z(t)\equiv \z(\x(t))\in\R^{N_\text{RO}}$, $N_{\text{RO}}<N$, where $\z:\R^N\to\R^{N_\text{RO}}$ is a continuously differentiable phase-space function guaranteeing the existence of single-time PDF $f_\z(\Z;t)$ of $\z(t)$. Here, $\Z\in\R^{N_\text{RO}}$ is a phase-space variable for $\z(t)$. We seek a deterministic PDE governing the evolution of $f_\z(\Z;t)$, referred to as the RoPDF equation. We restrict our formulation to the setting of marginal PDF equations, meaning that QoIs take the form $z(t) \triangleq x_k(t)$ for $k\in\{1,\dots,N\}$. The RoPDF equation then reduces to a one-dimensional PDE for the marginal PDF $f_{x_k}(X_k;t)$. 

We begin by defining an auxiliary functional or ``raw PDF'' \cite{Tartakovsky-2015-Method}
\begin{align} \label{rawpdf2}
	\Pi_{x_k}(X_k,t) \triangleq \delta(x_k(t)-X_k),
\end{align}
where $\delta(\cdot)$ is the Dirac delta function. We show in \cref{thm} that $\Pi_{x_k}$ weakly satisfies the random advection equation \cref{ro-clpi}. Moreover, by the Dirac delta's sifting property, for any given time $t>0$, the ensemble mean of $\Pi_k$ is $f_{x_k}$:
\begin{align} \label{eqn:ensemble-of-dirac-is-the-pdf}
	\left\langle \Pi_{x_k}\right\rangle(X_k,t) \triangleq 
	\int_\R \delta(Y-X_k)f_{x_k}(Y;t)\,dY = f_{x_k}(X_k;t).
\end{align}
Given this relationship, an exact, albeit unclosed, RoPDF equation for $f_{x_k}$ is found by stochastically homogenizing the equation for $\Pi_{x_k}$. In the setting of joint PDF equations, an analogous procedure for $f_\x$ has been the subject of several investigations \cite{Kraichnan_1961,maltbaphd,Maltba,Maltba2022}.

To proceed, we require a slight change in notation by denoting $\v(x_k(t),\x_{-k}(t),t;\w) \equiv \v(\x(t),t;\w)$, where $\x_{-k}(t) \triangleq [x_1(t),\dots,x_{k-1}(t),x_{k+1}(t),\dots,x_N(t)]^\top$, to emphasize the QoI in the velocity field. A heuristic derivation of the raw PDF equation for $\Pi_{x_k}$ can be done by weakly differentiating $\Pi_{x_k}$ with respect to $t$ and employing the sifting property. However, by means of a mollifier argument, we give the formal derivation in the following theorem.
\begin{theorem} \label{thm}
$\Pi_{x_k}(X_k,t;\w)$ almost surely obeys, in the sense of distributions, the linear conservation law
    \begin{align} \label{ro-clpi}
        \frac{\d\Pi_{x_k}}{\d t} + \frac{\d}{\d X_k}\Big[v_k(X_k,\x_{-k}(t),t;\w)\,\Pi_{x_k}\Big] = 0, \qquad \Pi_{x_k}(X_k,0) = \delta\left(\x_k^0(\w)-X_k\right).
    \end{align}
\end{theorem}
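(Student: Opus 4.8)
The plan is to establish \cref{ro-clpi} as a distributional identity on $\R\times(0,T_f]$ for a.e.\ fixed $\w$, using that, by the Remark, the path $t\mapsto x_k(t,\w)$ is $C^1$ with $\dot x_k(t) = v_k(x_k(t),\x_{-k}(t),t;\w)$. Two ingredients drive everything: the sifting property of the Dirac mass, which lets me trade the random state $x_k(t)$ for the phase variable $X_k$ wherever the two are multiplied against $\Pi_{x_k}$, and the classical chain rule applied to $\phi(x_k(t),t)$ for a smooth test function $\phi$. The mollifier enters only to legitimize the formal time-differentiation of $\delta(x_k(t)-X_k)$.

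Concretely, first I would regularize by replacing $\delta$ with a smooth approximate identity $\delta_\e$ and set $\Pi_\e(X_k,t) \triangleq \delta_\e(x_k(t)-X_k)$, which is now classically differentiable. A direct computation gives $\d_t\Pi_\e = \dot x_k(t)\,\delta_\e'(x_k(t)-X_k)$ and $\d_{X_k}\Pi_\e = -\delta_\e'(x_k(t)-X_k)$, so that, after inserting the ODE $\dot x_k = v_k(x_k(t),\x_{-k}(t),t;\w)$, the mollified raw PDF solves the advective (non-conservative) form exactly,
\begin{equation*}
\d_t\Pi_\e + v_k(x_k(t),\x_{-k}(t),t;\w)\,\d_{X_k}\Pi_\e = 0 .
\end{equation*}
Note the coefficient here is frozen at the state $x_k(t)$ and is therefore independent of $X_k$.

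The second step is to pass from this advective form to the conservative flux in \cref{ro-clpi}, i.e.\ to replace $v_k(x_k(t),\ldots)\,\d_{X_k}\Pi_\e$ by $\d_{X_k}[v_k(X_k,\ldots)\Pi_\e]$. Expanding the conservative term by the product rule produces the discrepancy
\begin{equation*}
\d_{X_k}\!\big[v_k(X_k,\ldots)\Pi_\e\big] - v_k(x_k(t),\ldots)\,\d_{X_k}\Pi_\e = \tfrac{\d v_k}{\d X_k}(X_k,\ldots)\,\Pi_\e + \big[v_k(X_k,\ldots)-v_k(x_k(t),\ldots)\big]\d_{X_k}\Pi_\e ,
\end{equation*}
and I would show this remainder vanishes as $\e\to0$ in the sense of distributions. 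Taylor-expanding $v_k$ in its first slot about $x_k(t)$, the bracketed difference contributes, to leading order, $\tfrac{\d v_k}{\d X_k}(x_k(t),\ldots)(X_k-x_k(t))\,\d_{X_k}\Pi_\e$; since $(X_k-x_k(t))\,\d_{X_k}\delta(x_k(t)-X_k) = -\delta(x_k(t)-X_k)$ (a form of $y\delta'(y)=-\delta(y)$), this limits to $-\tfrac{\d v_k}{\d X_k}(x_k(t),\ldots)\Pi_{x_k}$, which cancels exactly the sifting limit $\tfrac{\d v_k}{\d X_k}(X_k,\ldots)\Pi_\e\to\tfrac{\d v_k}{\d X_k}(x_k(t),\ldots)\Pi_{x_k}$ of the first term. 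Letting $\e\to0$ then yields \cref{ro-clpi} weakly, and the initial condition follows from $\Pi_{x_k}(X_k,0)=\delta(x_k(0)-X_k)=\delta(x_k^0-X_k)$.

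Equivalently, and more directly, I could bypass the remainder estimate by verifying the weak formulation outright: for $\phi\in C_c^\infty(\R\times[0,T_f))$, sifting reduces $\int_0^{T_f}\!\!\int_\R[\Pi_{x_k}\d_t\phi + v_k(X_k,\ldots)\Pi_{x_k}\,\d_{X_k}\phi]\,dX_k\,dt$ to $\int_0^{T_f}[\d_t\phi + \dot x_k\,\d_{X_k}\phi](x_k(t),t)\,dt = \int_0^{T_f}\tfrac{d}{dt}\phi(x_k(t),t)\,dt = -\phi(x_k^0,0)$, which matches the contribution $-\int_\R\delta(x_k^0-X_k)\phi(X_k,0)\,dX_k$ of the initial datum. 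The main obstacle is the careful distributional bookkeeping in the first approach---specifically the extra $\d_{X_k}v_k$ term generated by passing to conservative form---whose cancellation relies on the $C^1$-in-$X_k$ regularity of $v_k$ and the $C^1$-in-$t$ regularity of the path furnished by the Carath\'eodory theory in the Remark; everything else is routine once the mollification justifies differentiating the Dirac mass in time.
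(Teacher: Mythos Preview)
Your proposal is correct, and both of your routes work. The paper's proof, like your first approach, mollifies $\Pi_{x_k}$ and integrates by parts in $t$, but it handles the passage from the frozen coefficient $v_k(x_k(t),\ldots)$ to the variable one $v_k(X_k,\ldots)$ differently: instead of computing the advective--conservative discrepancy and canceling it via $y\delta'(y)=-\delta(y)$, the paper reinserts the \emph{un-mollified} $\Pi_{x_k}$ by sifting, writing $\dot\eta_\e(X_k-x_k(t))\,v_k(x_k(t),\ldots)=\int_\R\dot\eta_\e(X_k-Y)\,v_k(Y,\ldots)\,\Pi_{x_k}(Y,t)\,dY=\partial_{X_k}(\eta_\e\star v_k\Pi_{x_k})$, so a single integration by parts in $X_k$ lands directly on the conservative weak form before sending $\e\to0$. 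This bypasses your remainder estimate entirely and, notably, does not require $v_k$ to be $C^1$ in $X_k$---continuity suffices---whereas your cancellation argument does. Your second route, the direct weak-form verification via sifting and the chain rule for $t\mapsto\phi(x_k(t),t)$, is the most economical of the three and is fully rigorous; the paper alludes to essentially this computation just before the theorem but labels it ``heuristic,'' evidently having in mind the strong-form manipulation of $\delta$ rather than your weak-form phrasing.
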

\begin{proof}
Define $\Pi_\e(X_k,t)$, a regularized version of $\Pi_{x_k}(X_k,t)$ in~\cref{rawpdf2}, as 
\begin{align}\label{eq:Pi}
	\Pi_\e(X_k,t) \triangleq (\eta_\e \star \Pi_{x_k})(X_k,t) \triangleq \int_{\R} \eta_\e(X_k-Y)\Pi_{x_k}(Y,t) 	\,dY = \eta_\e(X_k-x_k(t)),
\end{align}
where the last equality holds by the definition of $\Pi_{x_k}(Y,t)$ and the sifting property of the Dirac distribution.
The standard positive mollifier $\eta_\e \in \mathscr{C}_c^{\infty}\left(\R\right)$ satisfies the conditions of symmetry, $\eta_\e(X_k-x_k(t))= \eta_\e(x_k(t)-X_k)$, and scaling
\begin{align}
	\eta_\e(Y) \triangleq \frac{\e^{-1}}{\int\eta \,dY}\,\eta\left(\frac{Y}{\e}\right), \quad 	\text{where } \eta(Y) 
	\triangleq 
	\begin{cases} 
		\exp\left(\frac{1}{|Y|^2-1}\right) &\text{ if } |Y| < 1\\ 
		0 &\text{ if } |Y| \ge 1.
	\end{cases}
\end{align}
Following standard arguments from~\cite{evans}, one can show that $\Pi_\e$ is a smooth approximation of $\Pi_{x_k}$. 
Let $\phi(X_k,t) \in \mathscr{C}_c^1\left(\R\times [0,\infty)\right)$. It follows from~\cref{eq:Pi} that
\begin{align}\label{I1}
	\mathscr{I} \triangleq \int_0^{\infty}\int_\R \Pi_\e(X_k,t)\frac{\partial \phi}{\partial t}
	(X_k,t) \,dX_k dt =  \int_0^{\infty}\int_\R \eta_\e(X_k-x_k(t))\frac{\partial \phi}{\partial t}
	(X_k,t) \,dX_k dt.
\end{align}
Integrating by parts in $t$ and applying the sifting property gives 
\begin{align*}
	\mathscr{I}  = \;& \int_0^{\infty}\int_\R \dot{\eta}_\e(X_k-x_k(t))
	v_k(x_k(t),\x_{-k}(t),t;\w)\phi(X_k,t) 
	\,dX_k dt \\ 
 - \;& \int_\R  \eta_\e\left(X_k-x_k^0\right) \phi(X_k,0) \,dX_k,\\
	= \;& \int_0^{\infty}\int_\R \int_\R \dot{\eta}_\e(X_k-Y)v_k(Y,\x_{-k}(t),t;\w)
	\Pi_{x_k}(Y,t)\phi(X_k,t) \,dY dX_k dt \\ - \;& \int_\R  \Pi_\e(X_k,0)\phi(X_k,0) dX_k,
\end{align*}
where $\dot{\eta}_\e( \cdot)$ is the derivative of $\eta_\e(\cdot)$. According to the Gauss-Ostrogradsky theorem in $X_k$,
\begin{align}\label{I2}
	\mathscr{I}  =  -\int_0^{\infty}\int_\R (\eta_\e \star v_k\Pi_{x_k})(X_k,t)\frac{\d \phi}{\d X_k}(X_k,t) \,dX_k dt - \int_\R  \Pi_\e(X_k,0)\phi(X_k,0) \,dX_k.
\end{align}
It follows from~\cref{I2} and~\cref{I1} that for any $\phi \in \mathscr{C}_c^1\left(\R\times [0,\infty)\right)$, 
\begin{align*}
	\int_0^{\infty}\int_\R \Pi_\e\frac{\partial \phi}{\partial t} \,dX_k dt 
	+ \int_0^{\infty}\int_\R (\eta_\e \star v_k\Pi_{x_k}) \frac{\d \phi}{\d X_k} \,dX_k dt 
	+ \int_\R  \Pi_\e(X_k,0)\phi(X_k,0) \,dX_k = 0.
\end{align*}
By standard arguments, taking the limit $\e \rightarrow 0$ gives
\begin{align*} 
	\int_0^{\infty}\int_\R \Pi_{x_k}\frac{\partial \phi}{\partial t} \,dX_k dt 
	+ \int_0^{\infty}\int_\R (v_k\Pi_{x_k}) \frac{\d \phi}{\d X_k}\,dX_k dt 
	+ \int_\R  \Pi_{x_k}(X_k,0)\phi(X_k,0) \,dX_k dt = 0;
\end{align*}
 hence, $\Pi_{x_k}$ is the distributional solution to \cref{ro-clpi}, which completes the proof. 
\end{proof}

Taking the ensemble mean of \cref{ro-clpi} over the space of $x_k(t)$, and applying the sifting property gives 
\begin{align} \label{eqn:RoPDF-derivation}
	\frac{\d f_{x_k}}{\d t}
	+ \frac{\d}{\d X_k}\int_{\R^{N-1}}\int_{\R^{N_p}}v_k(X_k,\X_{-k},t;\boldsymbol\Xi)
	f_{\x,\boldsymbol\xi}(\X,\boldsymbol\Xi;t)
	\,d\boldsymbol\Xi \,d\X_{-k} =0,
\end{align} 
where $f_{\x,\boldsymbol\xi}(\X,\boldsymbol\Xi;t)$ denotes the joint PDF of system states $\x(t)$ and random coefficients $\boldsymbol\xi(t)$. RoPDF equation \cref{eqn:RoPDF-derivation} is exact, but unclosed, since it depends on the generally unknown $f_{\x,\boldsymbol\xi}$ and not on $f_{x_k}$ alone.  However, factoring $f_{\x,\boldsymbol\xi}$ into the product of the marginal PDF $f_{x_k}$ and conditional PDF $f_{\mathbf{x}_{-k},\boldsymbol{\xi}|x_k}$, \cref{eqn:RoPDF-derivation} can be expressed in terms of the regression function $\mathcal{R}$:
\begin{align} \label{RoPDF3}
	\frac{\d f_{x_k}}{\d t}
	+ \frac{\d}{\d X_k}\big(\mathcal{R}(X_k,t) f_{x_k}\big) =0, \qquad f_{x_k}(X_k;0) = \int_{\R^{N-1}} f_{\x^0}(\X)\, d\X_{-k},
\end{align} 
together with vanishing boundary conditions, where 
\begin{equation} \label{eqn:rf}
    \mathcal{R}(X_k, t) \triangleq 
    \big\langle v_k(X_k, \mathbf{x}_{-k}(t), t; \w) \,\big| \,x_k(t) = X_k \big\rangle.
\end{equation} 
is to be estimated from data. 

In its current form, \cref{RoPDF3} is fully data-driven, completely relying on accurate estimation of \cref{eqn:rf}. However, many applications produce a regression function that is partially, if not fully, separable in the QoI $x_k(t)$. By this, we mean the $k$-th velocity component can be decomposed into $v_k(\x,t;\w) = \sum_{i\in I} g_i(x_k,t)h_i(\x,t;\w)$ for some finite collection of known real-valued functions $\{g_i, h_i\}_{i\in I}$. Then, each $g_i(X_k,t)$ may be pulled outside the conditional expectation \cref{eqn:rf} and need not be estimated, giving the following physics-informed representation of \cref{RoPDF3}:
\begin{align} \label{RoPDFsep}
	\frac{\d f_{x_k}}{\d t}
	+ \frac{\d}{\d X_k}\left[\left(\sum_{i\in I}g_i(X_k,t)\rmc_i(X_k,t)\right) f_{x_k}\right] =0,
\end{align} 
with new regression functions $\rmc_i(X_k,t)\triangleq \langle h_i(X_k,\x_{-k}(t),t;\w) \,|\, x_k(t) = X_k \rangle$. If $h_i$ has no dependence on $x_k$, i.e., $h_i \equiv h_i(\x_{-k},t;\w)$, for all $i\in I$, then the regression function \cref{eqn:rf} is considered fully separable with respect to the QoI. In both settings, part of the advection coefficient is known in closed-form, reducing the amount of data needed for accurate RoPDF solutions, as was investigated for non-stiff, noiseless RODEs in \cite{BRENNAN2018281} and It\^o SDEs in \cite{MaltbaPES}. 

As is typical in UQ, uncertainty in \cref{ivp2} has been fully prescribed. Hence, corresponding state data to be used for regression is synthetically (and independently) generated by numerically integrating \cref{ivp2}. However, since we are concerned with stiff, multiscale RODEs, costly implicit schemes are required for this data generation, leading to limited data availability. In other words, regression is performed in the small-sample regime, which calls for more expensive, robust algorithms. This issue is exacerbated for systems exhibiting strong nonlinearities, where nonparametric methods must be employed as in \Cref{sec:power}. Moreover, regression functions associated with multiscale RODEs may vary considerably on short timescales and induce a large RoPDF equation Courant number, requiring regression estimates at an unusually large number of discrete times. However, we drastically reduce our computational overhead by considering only simple, non-robust regression at sparse observation times. Naturally, this simplification amounts to misspecifying the governing RoPDF equation and introduces nonnegligible errors, which we control by sparsely assimilating in low-fidelity RoPDF estimates. The result is a method whose computational demand is almost entirely associated with the overhead of synthetic state-data generation via (relatively few) MC realizations of \cref{ivp2}, while preserving the qualitative behavior of the original equation.


\section{Data Assimilation}
\label{sec:assim}

To reduce error introduced by sparse observation times associated with stiff, multiscale systems, we frame the RoPDF method as a data assimilation problem. Arguably, the two most commonly employed assimilation procedures for hyperbolic PDEs are 4D-Var \cite{Dimet_2010} and the ensemble Kalman filter (EnKF). However, neither are particularly well-suited for the RoPDF method. The former relies on a computationally demanding global optimization procedure and the latter suffers from the curse of dimensionality, making it ill-suited for discretized PDEs. Moreover, the EnKF performs poorly when PDE observations are noisy with large and/or highly non-Gaussian errors \cite{jazwinski1970, Lei_2015}. However, one workaround, and the first assimilation procedure under consideration is nudging (NR), where the PDE correction term is designed to converge quickly to zero in one forward simulation. Moreover, the nudging appellation motivates our second, global approach, where we make use of DNNs. Although it is not dynamic assimilation, DNNs can be viewed as instantaneous relaxation, which can address some of NR's shortcomings, such as (temporal) sparsity of available data.

To formulate the assimilation problems, suppose we have generated $N_\text{MC}^\text{tr}$ MC realizations of \cref{ivp2} (i.e., training data) so that $\rmc$ may be approximated by a smooth estimator $\hat\rmc$. Letting $\E(X_k, t) \triangleq \rmc(X_k, t) - \hat\rmc(X_k, t)$ denote the corresponding residual arising from the regression, the RoPDF equation \cref{RoPDF3} can be identically written
\begin{align} \label{RoPDF4}
    \frac{\d f_{x_k}}{\d t} + \frac{\d}{\d X_k} \big(\hat\rmc\, f_{x_k} \big) = \big\langle\M\big\rangle,
\end{align}
where $\langle\M\rangle \equiv -\d_{X_k}(\E f_{x_k})$, referred to as the model defect/discrepancy, is unknown {\emph{a priori}}. Note that we have used the fully data-driven RoPDF representation \cref{RoPDF3} simply for notational brevity. In practice, the advection coefficient takes the form of the physics-informed version \cref{RoPDFsep}, albeit with $\hat\rmc_i$ in place of $\rmc_i$. By means of NR and DNNs, we learn the model defect by assimilating in RoPDF observations $H(f_{x_k})$, where $H(\cdot)$ represents a given RoPDF observation map. 


\subsection{Nudging}
\label{sec:nr} 

To reduce RoPDF discrepancy, NR assumes the model defect can be described by a simple correction. The resulting PDE for the observer/estimator $\hat f_{x_k}^\text{NR}$ takes the form
\begin{align} \label{fnud}
	\frac{\d \hat f_{x_k}^\text{NR}}{\d t} + \frac{\d}{\d X_k}\big(\hat \rmc\,\hat f_{x_k}^\text{NR}\big)
	= \lambda\big(H(f_{x_k})-\hat f_{x_k}^\text{NR}\big)
	,\qquad \hat f_{x_k}^\text{NR}(X_k;0) = f_{x_k}(X_k;0),
\end{align}
with boundary conditions identical to those in \cref{RoPDF3}. Here, the observation map $H(\cdot)$ accounts for data availability and sparsity, i.e., when observations of $f_{x_k}$ are possibly noisy and known only on a subset of spatiotemporal locations of the domain. Taking  $H(\cdot)$ to be the identity map implies that complete, exact observations are available. The NR coefficient $\lambda>0$ acts as a finite learning rate that dynamically relaxes the observer towards the observations, controlling the convergence of $\hat f_{x_k}^\text{NR}$ to $f_{x_k}$. The choice of $\lambda$ is largely empirical and typically requires some level of manual tuning. This is in contrast to the EnKF, which takes $\lambda$ to be the Kalman gain matrix, requiring Gaussian error distributions. In practice, the observations of $f_{x_k}$ are typically noisy to some degree. If they are indeed assumed to be perfectly random, it can be shown that the correction in \cref{fnud} is equivalent to scaled white noise in a stochastic PDE, as discussed in \cite{Boulanger_2015} and the references therein. This equivalence was originally given by Jarwisnky \cite{jazwinski1970} between nudged ODEs and SDEs. At a high level, this explains why the Kalman gain matrix is optimal when error distributions are Gaussian, assuming the underlying dynamics are linear. However, practical NR ignores this introduced uncertainty to a certain level, allowing $\lambda$ to be manually tuned to fit the data or used for forecasting when the criteria for EnKF are not met. Moreover, when observations are sparse, $\lambda$ can be constructed to vary in space and/or time, classically comprised of weight functions. Another option is to interpolate observations to the full computational domain. General strategies for constructing $\lambda$ are reviewed in \cite{Lakshmivarahan_2013}.

\begin{remark} \label{rm:def}
The $\langle\cdot\rangle$ notation used in the correction of \cref{RoPDF4} refers to the defect being a homogenized quantity. This is to maintain notational consistency with the existing literature on nudged PDEs \cite{Boulanger_2015}, where NR is reformulated on the ``microscopic level'' by using the PDE's kinetic description. In the setting of PDF/CDF equations, this was studied in \cite{Boso_2020} for nonlinear hyperbolic PDEs with random initial data. To the best of our knowledge, we are the first to consider it for RoPDF equations, where the kinetic formulation amounts to nudging the raw RoPDF equation \cref{ro-clpi} with (possibly noisy, sparse) observations $H(\Pi_{x_k})$, for which the strong convergence results of \cite{Boulanger_2015} apply. By the triangle inequality, convergence of the microscopic observer $\hat\Pi_{x_k}$ to $\Pi_{x_k}$ implies $L_1$ convergence of a corresponding macroscopic nudged observer, which we thoroughly discuss in \Cref{app:nr}.
\end{remark}


\subsection{Deep Neural Networks}
\label{sec:dnn}

A potential drawback of NR is that qualitative properties of the true RoPDF cannot be guaranteed for the observer, particularly when the observations are noisy and/or sparse. Although these issues are not present for the applications in \Cref{sec:experiments}, this generally may not be the case. For \cref{fnud}, its solution $\hat f_{x_k}^\text{NR}$ is not guaranteed to have the PDF properties of nonnegativity and unit mass. One alternative is the use of DNNs for direct RoPDF inference from observations, where regularity terms may be added to the DNN loss function to enforce PDF properties and appropriate boundary conditions if necessary.

To reformulate the NR problem \cref{fnud} as an instantaneous one via a DNN, we introduce an optimization problem over the (sparse) spatiotemporal observation points of the domain $(X_k,t)$.  We denote the vector of $N_\text{obs}$-many discrete observation locations by $\tilde\X_k^\nu =\left[\X_k^\top, \mathbf T_\nu^\top\right]^\top$, where $\X_k$ and $\mathbf T_\nu$ represent the spatial and temporal components, respectively. The subscript $\nu\in\mathbb{N}$ denotes the level of temporal sparsity of the data, which is formally defined in \Cref{sec:num}. The optimization problem is then defined by minimizing the discrepancy between the observer $\hat f_{x_k}^\text{DNN}$ and observations via the following loss function:
\begin{align} \label{nnloss}
	\mathscr L \triangleq \left|\left|\hat f_{x_k}^\text{DNN}(\X_k;\mathbf T_\nu) 
	- H(f_{x_k}(\X_k;\mathbf T_\nu))\right|\right|,
\end{align}
where $||\cdot||$ is an appropriate norm over $\R^{N_\text{obs}}$. Directly approximating $\hat f_{x_k}^\text{DNN}$ in \cref{nnloss} as a DNN is a possibility, but given that the result would be solely data-driven, utilizing the partially known dynamics (i.e., the separable advection term) of \cref{RoPDFsep} and \cref{RoPDF4} gives better results due to increased statistical power. To incorporate such dynamics and render the loss \cref{nnloss} physics-informed, we utilize the RoPDF equation's linearity. 

The solution $f_{x_k}$ to \cref{RoPDF4} can be decomposed into the sum of its homogeneous and particular (defect) solutions $f_{x_k}^\text{h}$ and $f_{x_k}^\text{d}$, respectively, such that $f_{x_k} = f_{x_k}^\text{h} + f_{x_k}^\text{d}$. Naturally, $f_{x_k}^\text{h}$ is the solution to the homogeneous equation \cref{RoPDF4} (i.e., when $\langle\M\rangle\equiv0$), while $f_{x_k}^\text{d}$ accounts for the defect's contribution. This fact can be established by applying the method of characteristics to \cref{RoPDF4} via the terminal value problem
\begin{align} \label{tvp}
	\frac{d \chi(s)}{ds} = \hat\rmc(\chi(s),s), \qquad \chi(t)=X_k,
\end{align} 
and its associated flow $\chi(s)\equiv\Phi(s;X_k,t)$ for $0\le s <t$. By restricting $f_{x_k}$ along the characteristic curves, the RoPDF equation can be solved via integrating factor, resulting in 
\begin{align} \label{fparts}
	f_{x_k}^\text{h}(X_k;t) &= \J(0;X_k,t) f_{x_k}(\Phi(0;X_k,t);0), \notag\\
	f_{x_k}^\text{d}(X_k;t) &= \int_0^t \langle\M\rangle
	(\chi(r),r)\J^{-1}(r;X_k,t)\,dr,
\end{align}
where
$
	\J(s;X_k,t)\triangleq \exp\left(-\int_s^t \d_\chi \hat\rmc(\chi(r),r)
	\right)\, dr.
$

The homogeneous solution $f_{x_k}^\text{h}$ in \cref{fparts} is directly computed via numerical integration. This is done by  separating the advection coefficient into its known and unknown terms as in \cref{RoPDFsep}, approximating $\rmc_i$ with smooth estimators $\hat\rmc_i$ on the spatial mesh $\X_k$ for each (sparse) observation time in $\mathbf T_\nu$. When $\nu>1$, the learned $\hat\rmc_i$ may be interpolated to the dense temporal grid required by the homogeneous PDE discretization to improve performance. Having $f_{x_k}^\text{h}$ at our disposal, we construct an instantaneous observer $\hat f_{x_k}^\text{DNN} \triangleq f_{x_k}^\text{h} + \hat f_{x_k}^\text{d}$ for $f_{x_k}$ by estimating $f_{x_k}^\text{d}$ with a fully connected feedforward DNN $\hat f_{x_k}^\text{d}$ containing $N_\text{lay}$ layers:
\begin{align} \label{mlp}
	\hat f_{x_k}^\text{d}(X_k;t) \triangleq \A_{N_\text{lay}} \circ
	\phi \circ \A_{N_\text{lay}-1}  \circ \cdots \circ \phi \circ \A_1\tilde\X_k^\nu,
\end{align}  
where $\phi$ is a nonlinear activation function applied recursively to each of the $N_\text{lay}-1$ hidden layers. Note that since $\phi$ is typically bounded from above and/or below, it is not applied to the output layer $\A_{N_\text{lay}}$ since our intended purpose is regression. Since $f_{x_k}^\text{d}$ accounts for the defect's contributions to the RoPDF $f_{x_k}$, its qualitative behavior can be complex, i.e., nonperiodic with steep gradients. DNNs are an expressive hypothesis class, and are known to learn complex function behavior, which is the reasoning behind this choice of observer. Moreover, partial separability of the advection coefficient ensures that $f_{x_k}^\text{h}$ and therefore $\hat f_{x_k}^\text{DNN}$ is physics-informed, resulting in increased predictive power. 

Since there are no existing theoretical results for the convergence of the DNN observer, the choice of norm is not as restrictive as in NR. We take the standard mean squared error (MSE) since it gives a smooth, convex loss, significantly reducing computational costs compared to the nondifferentiable $L_1$ loss, but it is not without caveats. Employing the MSE loss for DNN regression may result in poor training convergence if the underlying error distributions in the observations $H(f_{x_k})$ are not close to being independent, identical, and Gaussian. The MSE can be replaced with a more general loss function to address errors that strongly violate these properties. For example, to account for non-constant variance, one can use the generalized least squares (GLS) loss as in \cite[Eq. 6]{Lagergren2_2020}, where DNNs (specifically physics-informed neural networks (PINNs) \cite{RAISSI2019686}) were trained with the GLS loss to improve training convergence. 
Moreover, if the resulting observer does not have the desired properties of a PDF, regularity terms may be added to the loss. For example, to enforce nonnegativity, the penalty
\begin{equation} \label{eqn:dnn-nonnegative-penalty}
    \frac{1}{N_\text{obs}} \sum_{\{i:\hat f_{x_k}^\text{DNN}<0\}} \big(\hat f_{x_k}^\text{DNN}(\tilde{X}_{k,i}^\nu)\big)^2
\end{equation} may be included in the loss. Similar penalties may also be added to enforce unit mass and boundary conditions. However, while effective, using generalized loss functions or regularity terms can increase training costs, which occurred for the \Cref{sec:experiments} applications. Therefore, the experiments presented use the standard MSE loss but with problem-specific transformations applied (before training) to both the predictor (observation location) and response ($H(f_{x_k})$) data to account for vastly differing scales and a variety of complex error distributions.

\begin{remark}
	Instead of the DNN formulation above, a PINN may be employed, which would simultaneously learn $\rmc_i$ and the solution to \cref{RoPDFsep} via stacked DNNs. Due to the behavior of $\rmc_i$ in \Cref{sec:power}, a PINN formulation decreased predictive accuracy and increased training costs. This is likely due to the highly nonconvex loss landscapes associated with PINN approaches to advection-dominant PDEs, as discussed in \cite{NEURIPS2021_df438e52}.
\end{remark}


\section{Numerics}
\label{sec:num}

Via the order 1.5 implicit strong Taylor scheme from \cite[Ch. 10.2]{Han2017}, state data is synthetically generated by MC simulations of \cref{ivp2} and collected on a set of uniform times $\mathbf{T}_1= \{t_m\}_{m=0}^{M}\triangleq\{m\Delta t\}_{m=0}^{M}$, where $t_{M}=T_f$. For each $t_m\in\mathbf{T}_1$, a large number of $N_\text{MC}$ MC samples of the QoI $x_k(t_m)$ are post-processed with robust, adaptive-like KDE \cite{Botev_2010} to form a MC marginal PDF solution $f_\text{MC}(X_k; t_m)$, which is treated as a yardstick for testing the RoPDF method. Naturally, $N_\text{MC}$ is problem dependent and is determined by means of a convergence study for each experiment. 

We introduce the notion of sparse data via a sparsity factor $\nu\in\{1,\dots,M\}$ and its associated observation times $\mathbf{T}_\nu = \{t_{m_l}\}_{l=0}^{M_\nu} \triangleq \{\nu l \Delta t\}_{l=0}^{M_\nu}$, where $M_\nu\le M$. Hence, $\nu=1$ implies complete observations, $\nu=2$ implies every other observation is available, and so on. Independent of the trials for $f_\text{MC}$, we perform $N_\text{MC}^\text{tr}\ll N_\text{MC}$ MC simulations of \cref{ivp2} to collect state ($\x$) and, if required, noise ($\boldsymbol\xi$) data for training. For a given $\nu$, at each time $t_{m_l}\in\mathbf T_\nu$, these samples are used to learn $\hat\rmc_i(X_k,t_{m_l})$ via linear (ordinary least squares (OLS)) regression and Gaussian local linear regression (GLLR) \cite{Hastie_2009} for the linear and power systems applications in \Cref{sec:experiments}, respectively. Additionally, the $x_k(t_{m_l})$ samples are post-processed with KDE \cite{Botev_2010} to compute a low-fidelity RoPDF observation $H(f_{x_k}(X_k;t_{m_l}))$ for each observation time. Since $N_\text{MC}^\text{tr}\ll N_\text{MC}$, these RoPDF observations are inherently noisy, which is exacerbated in the temporal domain for $\nu>1$. We henceforth denote these observations by $f_\text{MC}^{\text{tr},\nu}$, to identify their dependence on the training sample size, KDE, and the sparsity level. Much of our analysis will focus on how training size and sparsity level affect observer accuracy.

In all experiments that follow, the homogeneous RoPDF equations are solved on the set of dense times $\mathbf T_1$ via a Lax-Wendroff finite volume discretization with a monotonized central limiter. When observation times are sparse, this is done by interpolating the learned regression functions $\hat\rmc_i$ to the dense spatiotemporal grid $(\X_k,\mathbf T_1)$ via 2D modified Akima interpolation \cite{makima}. Although the phase space is unbounded in our formulation, the computational spatial domain is taken to be a sufficiently large (bounded) interval so that vanishing boundary conditions at $\pm\infty$ may be approximated with homogeneous Dirichlet conditions. 


\subsection{Assimilation Training}
\label{sec:train}

The nudged equation \cref{fnud} is solved by successively considering the homogeneous advection and source equations via Strang operator splitting, where the source equation is integrated with a Crank-Nicolson discretization. Similar to $\hat\rmc_i$, for $\nu>1$, observations $f_\text{MC}^{\text{tr},\nu}$ are also interpolated to the dense mesh before numerically integrating, which avoids the laborious tuning of NR weight functions. We take the relaxation rate $\lambda\equiv\lambda_\nu(t)$ to be piecewise constant over observation intervals $[t_{m_l},t_{m_{l+1}})$, which is tuned in an online fashion to reduce predictive error at the subsequent observation times. In particular, for a given interval with $t\in[t_{m_l},t_{m_{l+1}})$ and $\nu>1$, we consider two possible values: $\lambda_\nu(t)\equiv0$ and $\lambda_\nu(t)\equiv\nu$. Supposing $\lambda_\nu$ and $\hat f_{x_k}^\text{NR}$ have been computed for $t<t_{m_l}$, we solve \cref{fnud} up to the following observation time $t_{m_{l+1}}$ for both possible values of $\lambda_\nu$. Whichever produces the lowest ($L_1$) prediction error between the observer $\hat f_{x_k}^\text{NR}$ and the observation $f_\text{MC}^{\text{tr},\nu}$ at time $t_{m_{l+1}}$ is taken as $\lambda_\nu(t)$ on $t\in[t_{m_l},t_{m_{l+1}})$. This approach to tuning ensures that observations are assimilated into RoPDF dynamics only when necessary, significantly improving the purely scalar NR approach in \cite{maltbaphd}. 

For both applications that follow, in the DNN formulation, we represent the defect solution $\hat f_{x_k}^\text{d}$ as a fully connected DNN with a ReLU  activation function. For each combination of $\nu$ and $N_\text{MC}^\text{tr}$, we train a DNN via the standard MSE loss \cref{nnloss} using a $30\%$ holdout set for model validation. The MSE is minimized via the L-BFGS optimizer in PyTorch v1.13.0 \cite{pytorch} with a maximum of $5\times10^3$ iterations. To prevent overfitting, we implement an early-stopping criterion by imposing a $10^{-8}$ gradient tolerance, which allowed training to terminate in at most $10^3$ iterations. We consider $3$ to $10$ equally sized hidden layers, ultimately choosing the network depth that minimizes validation MSE. For \Cref{sec:lin}, the network width is fixed at $20$ neurons, which is subsequently increased to $32$ neurons for \Cref{sec:power}. In both applications, $N_\text{MC}^\text{tr}$ has little effect on optimal network depth so long as it is not overwhelmingly small relative to dynamic complexity, e.g., greater than $250$ and $10^3$ for the linear and powers systems, respectively. $\nu\ge1$, on the other hand, is much more influential on optimal depth. This is not surprising considering that sparse $\hat\rmc_i$ may introduce large RoPDF errors for systems that are multiscale and/or rapidly oscillating, resulting in defects of greater complexity. After the training period, for each $\nu$, an $\hat f_{x_k}^\text{d}$ prediction is computed on the set of complete times $\mathbf T_1$ required by the homogeneous equation's discretization. It is added to the homogeneous solution $f_{x_k}^\text{h}$ to obtain the instantaneous observer $\hat f_{x_k}^\text{DNN}$. 


\subsection{Computational Complexity}
\label{sec:complexity}

Unlike DNNs, it is straightforward to compute the homogeneous and NR equations $\bo(\cdot)$ complexities. For simplicity, suppose the 1D spatial domain is discretized with a fixed uniform mesh $\X_k$ containing $N_{x_k}$ cells. Likewise, the dense temporal grid $\mathbf T_1$ contains $M+1$ nodes. Assume $N_\text{MC}^\text{tr}$ MC realizations of \cref{ivp2} have been computed and stored at the $M_\nu \approx \lceil M/\nu \rceil$-many times $\mathbf T_\nu$ ($\nu\ge1$).
 
 The homogeneous equation coincides with NR \cref{fnud} when $\lambda\equiv0$ for all $t\in[0,T_f]$. Given an advection coefficient, a Lax-Wendroff time step requires $\bo(N_{x_k})$ operations, and therefore a total of $\bo(MN_{x_k})$ operations over $[0,T_f]$. Given the Courant-Friedrichs-Lewy (CFL) condition to ensure numerical stability, this can be expressed as $\bo(N_{x_k}^2)$. However, we must account for the cost of learning $\hat\rmc$. Consider the more expensive nonparametric GLLR regression.
 For a given $t_{m_l}\in\mathbf T_\nu$ and bandwidth parameter, GLLR fitting and evaluation has $\bo(N_\text{MC}^\text{tr} N_{x_k})$ complexity \cite[Ch. 6.9]{Hastie_2009}. A typical CV procedure for bandwidth selection increases this cost from linear to quadratic in $N_\text{MC}^\text{tr}$. However, we avoid CV by transforming the data so that the simple plug-in estimator \cref{eq:bw} is sufficiently accurate, keeping GLLR $\bo(N_\text{MC}^\text{tr} N_{x_k})$, and therefore $\bo(M_\nu N_\text{MC}^\text{tr} N_{x_k})$ over all observation times. The cost of employing any piecewise cubic Hermite interpolating polynomial is at most $\bo(M N_{x_k})$, giving a total complexity of
 \begin{equation} \label{eq:bo}
    \bo\left(M_\nu N_\text{MC}^\text{tr}N_{x_k} + M N_{x_k}+ M N_{x_k}\right) = \bo\left(\left(N_\text{MC}^\text{tr}/\nu + 2\right) N_{x_k}^2\right)
 \end{equation}
for the homogeneous RoPDF equation. 

In addition to \cref{eq:bo}, to solve the nudged equation \cref{fnud}, we must account for Strang splitting and the KDE/interpolation procedure for $f_\text{MC}^{\text{tr},\nu}$. However, the cost of computing $f_\text{MC}^{\text{tr},\nu}$ at times $\mathbf T_\nu$ via KDE and interpolating to the dense mesh is identical to the advection coefficient procedure. Moreover, a single time step of the source equation takes $\bo(N_{x_k})$ operations, and taking $M$ to be even only requires an additional $M$ time steps for Strang splitting (compared to the standard $2M$ additional steps). Thus, the computational complexity of solving the nudged equation \cref{fnud} is twice that of the homogeneous equation.

Circling back to the overhead of generating $N_\text{MC}^\text{tr}$ realizations of the $N$-dimensional RODE \cref{ivp2}, for a given path realization, the majority of costs corresponds to the nonlinear/implicit solve required at each time step. If we assume that the mean velocity field's Jacobian is known exactly, then each iteration of a Newton-type method can be done in $\bo(N^2)$ to $\bo(N^3)$ operations, depending on the system and matrix factorization. Recalling $\bo(M)=\bo(N_{x_k})$, this amounts to an overhead of $\bo(N_\text{MC}^\text{tr} N_{x_k} N^3)$ operations for an arbitrary stiff, nonlinear system. However, we have not accounted for multiple iterations during nonlinear solves nor Jacobian approximations. Hence, true overhead costs may be considerably larger, especially for very stiff and/or high-dimensional RODEs. Regardless, even for moderate $N$, sampling \cref{ivp2} dominates \cref{eq:bo}, and therefore the cost of the nudged RoPDF method.


\section{Experiments}
\label{sec:experiments}

We test the proposed RoPDF approaches on two applications, both with random initial data and driven by OU noise. The first in \Cref{sec:lin} is a stiff, 2D (i.e., $N=2$) linear system, included as a proof-of-concept. It highlights the need for data assimilation in RoPDF equations associated with sparsely observed stiff RODEs, even when the underlying dynamics are relatively simple. Our second application in \Cref{sec:power} is to power systems, where the RoPDF method is used for UQ of transmission/line failures in an electrical power grid. The governing model is a highly stiff, 47D nonlinear system. 


\subsection{Stiff Linear System}
\label{sec:lin}
We first consider the following linear RODE system:
\begin{align} \label{eq:linrode}
\dot x_1 &= -2x_1 + x_2 +2\sin(t), \qquad\qquad\qquad\qquad\qquad\qquad\,\, x_1(0) \sim \mathcal{N}\left(2, 0.15^2 \right), \notag\\
\dot x_2 &= (\alpha-1)x_1 - \alpha x_2 + \alpha(\cos(t)-\sin(t)) + \sigma\xi(t), \qquad  x_2(0) \sim\mathcal{N}\left(3, 0.15^2 \right),
\end{align}
to be solved up to $T_f=10$, where the Gaussian initial conditions and noise (i.e., $x_1(0)$, $x_2(0)$, $\xi(t)$) are all taken independent of one another. The driving colored noise is taken as an OU process defined as the solution to the It\^o SDE 
\begin{align} \label{eq:ou}
d\xi(t) = -\frac{\xi(t)}{\tau}dt + \sqrt{\frac{2}{\tau}}dW(t), \qquad \xi(0)\sim\mathcal{N}(0,1),
\end{align}
where $W(t)$ is a standard Wiener process independent of $\xi(0)$. Given this initial condition, $\xi(t)$ is an exponentially correlated stationary Gaussian process with correlation length $\tau>0$. Its solution is conditionally given by the scaled, time-transformed Wiener process 
\begin{align} \label{eq:ou2}
\xi(t) = \xi(0)e^{-t/\tau} + W\left(1-e^{-2t/\tau}\right).
\end{align}
Hence, paths of $\xi$ can be directly sampled from the laws of $\xi(0)$ and $W$, which is more accurate and efficient than numerically integrating \cref{eq:ou}. The intensity of $\xi$ is denoted by $\sigma>0$. Lastly, $\alpha>0$ serves as a stiffness parameter, making \cref{eq:linrode} stiff when $\alpha\gg1$. In the experiments that follow, we set $\alpha=999$, $\sigma=100$, $\tau=0.1$, and $k=1$ such that the QoI is $x_1(t)$.

\subsubsection{RoPDF Equation \& Numerics}
\label{sec:lin_ropdf}

\begin{figure}[t!] 
	\centering  
    \includegraphics[width=.3285\linewidth]{./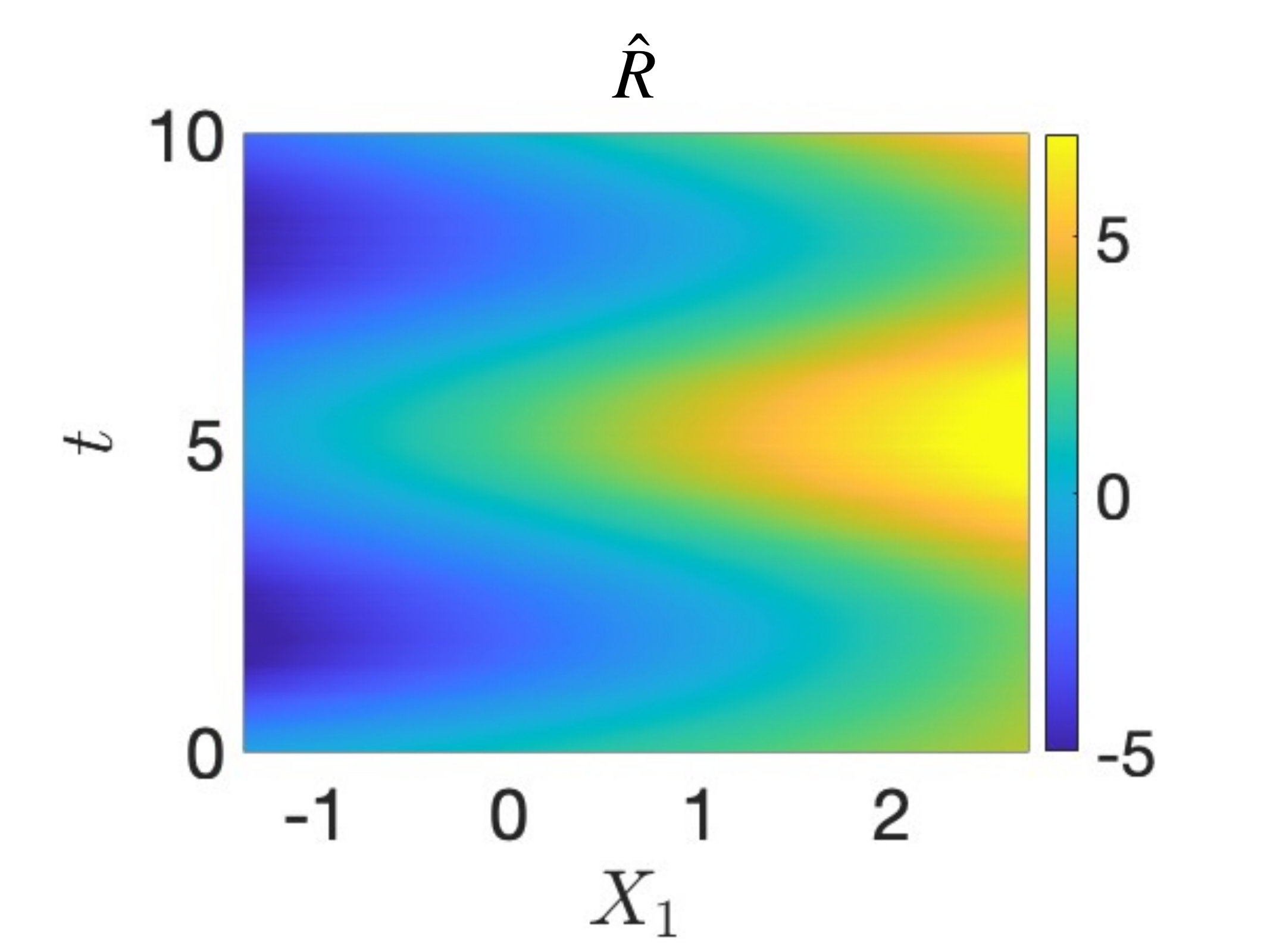}
    \includegraphics[width=.3285\linewidth]{./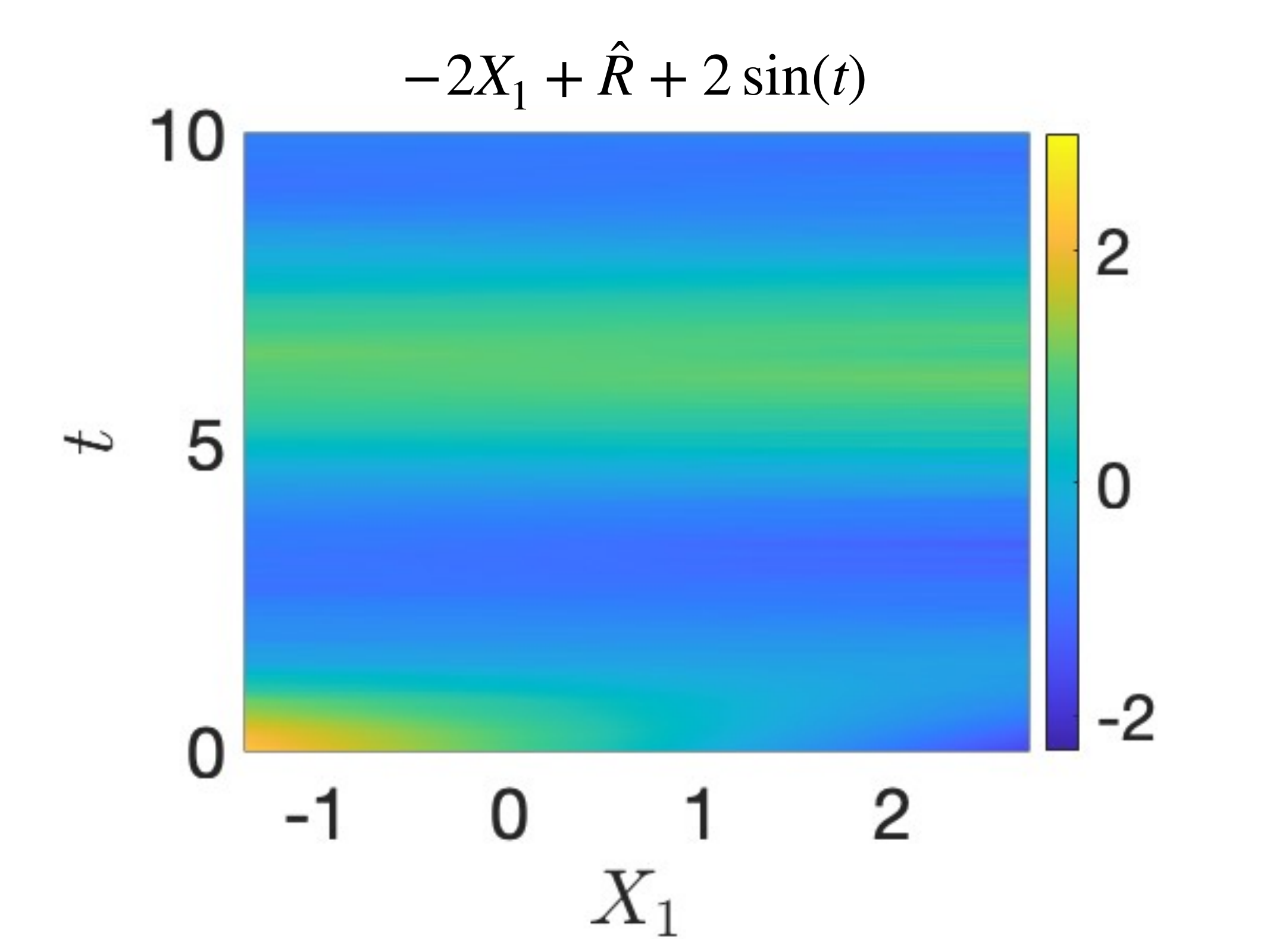}
    \includegraphics[width=.3285\linewidth]{./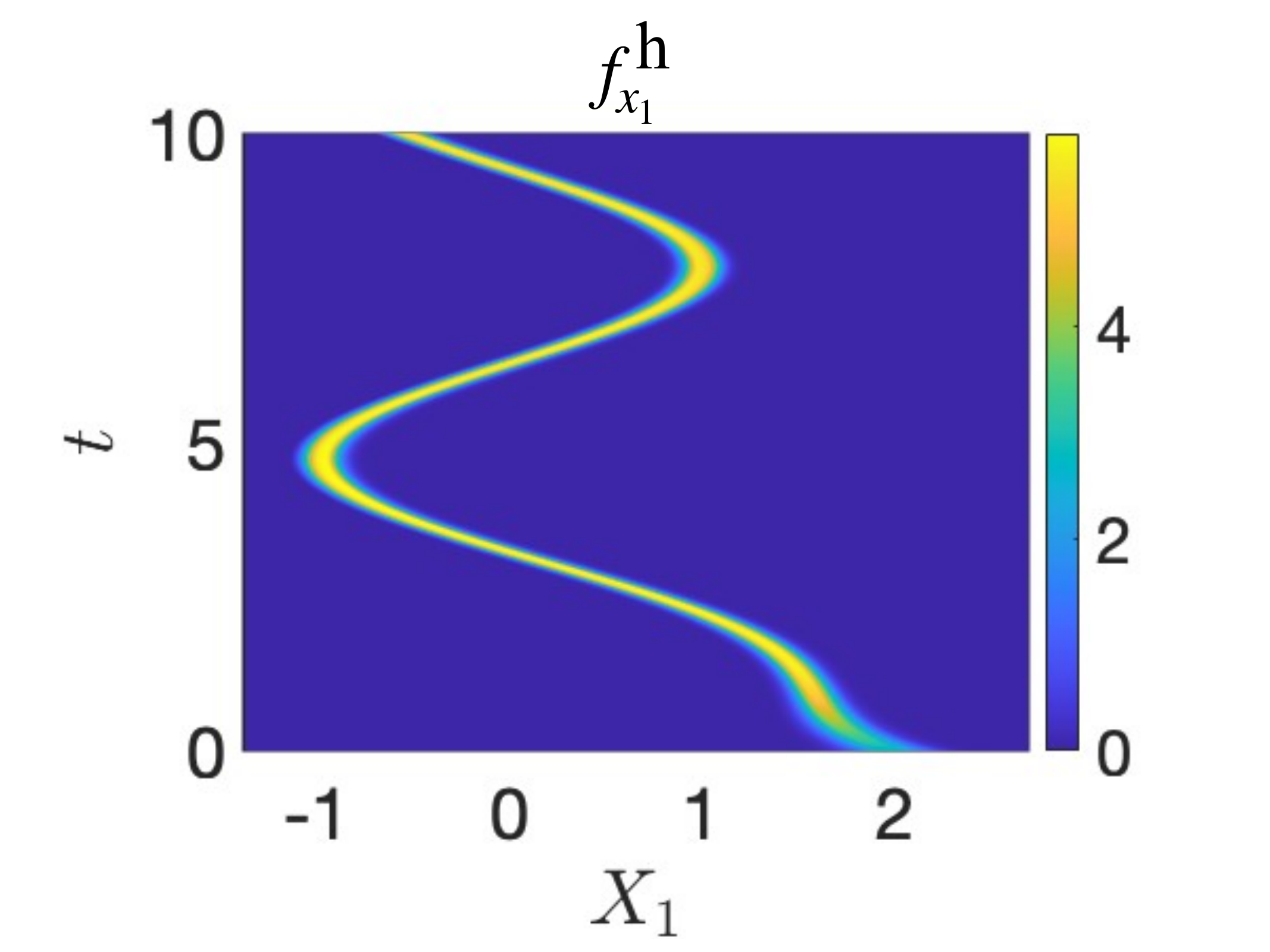}
\caption{(Left) Learned $\hat\rmc$ from $N_\text{MC}^\text{tr} = 5\times 10^2$ MC realizations of \cref{eq:linrode} at sparse times $\mathbf T_\nu$ with $\nu=2\times 10^2$. (Middle) The learned advection coefficient of \cref{eq:linpdf}. (Right) Evolution of the homogeneous solution $f_{x_1}^\text{h}$ to \cref{eq:linpdf}.}
\label{fig:lin_sol}
\end{figure}

The RoPDF equation \cref{RoPDFsep} takes the form
\begin{align} \label{eq:linpdf}
\frac{\d f_{x_1}}{\d t} + \frac{\d}{\d X_1}\Big[\Big(-2X_1 + \rmc(X_1,t) + 2\sin(t)\Big)\,f_{x_1}\Big] = 0,
\end{align}
where the initial condition $f_{x_1}(X_1;0)$ is the univariate Gaussian PDF of $x_1(0)$ and $\rmc(X_1,t) \triangleq \langle x_2(t)\,|\,x_1(t)=X_1\rangle$. The spatial mesh $\X_1$ is taken uniformly on $[-1.85, 3.15]$ with $5\times10^2$ cells such that $\Delta X_k = 10^{-2}$. This is a fairly dense mesh for the given dynamics; however, it demonstrates one manner in which sparse observations arise even when the dynamics are straightforward. For the given $\Delta X_k$ and estimated coefficient, the CFL condition requires $\Delta t \lesssim 1.4\times 10^{-3}$. Since this estimated bound is dependent upon the specific regression and interpolation methods, we take a slightly smaller time step $\Delta t = 10^{-3}$ for the dense grid $\mathbf T_1$. Given the simple structure of \cref{eq:linrode}, $\rmc$ is linear in $X_1$. Hence, the estimator $\hat\rmc$, which is displayed in \Cref{fig:lin_sol} along with its associated RoPDF evolution, is computed via OLS regression from $N_\text{MC}^\text{tr}$ MC realizations of $\x$ at each $t_{m_l}\in\mathbf T_\nu$.

To be consistent with the notation in our the DNN formulation, we denote the solution to \cref{eq:linpdf} (with $\hat\rmc$) by $f_{x_1}^\text{h}$, which signifies that no RoPDF observations were assimilated. We incorporate varying degrees of data sparsity by considering $\nu\in\{1, 2, 5\}\times10^2$, which corresponds to data availability at time increments of $0.1$, $0.2$, and $0.5$. Several training sample sizes were also considered in our experiments, but apart from our scalability results (\Cref{fig:lin_scale}), we limit our head-to-head comparisons to the $N_\text{MC}^\text{tr} = 5\times 10^2$ case, which is considerably fewer than the $N_\text{MC} = 1.5\times 10^4$ realizations needed to compute yardstick solution $f_\text{MC}$.

As mentioned in \Cref{sec:dnn}, the loss \cref{nnloss} in the DNN formulation is not actually minimized over the observation locations $\tilde\X_1^\nu$. Since the PDFs are near-Gaussian, for a given $\nu$, we compute the mean and standard deviation of $f_{x_1}^\text{h}(\X_1;t_{m_l})$ for each $t_{m_l}\in\mathbf T_\nu$. The spatial observation locations are then shifted and scaled by the corresponding mean and standard deviation for each time. Both $f_{x_1}^\text{h}$ and $f_\text{MC}^{\text{tr},\nu}$ are also scaled by these standard deviations. The resulting transformations result in PDFs that are nearly standard Gaussian for all observation times, albeit defined on varying/moving spatial grids. This method of standardization allows us to omit, for all $t_{m_l}\in\mathbf T_\nu$, any transformed spatial location with magnitude greater than four, i.e., where (transformed) $f_{x_1}^\text{h}$ and $f_\text{MC}^{\text{tr},\nu}$ are within machine epsilon. This improves DNN costs by reducing training input size and allows training to converge with shallower networks. After training, the $\hat f_{x_1}^\text{d}$ prediction on dense $\mathbf T_1$ is transformed back to original scale on $\X_1$.


\subsubsection{Error Analysis}
\label{sec:lin_err}

\begin{figure}[t!] 
	\centering  
    \includegraphics[width=.3285\linewidth]{./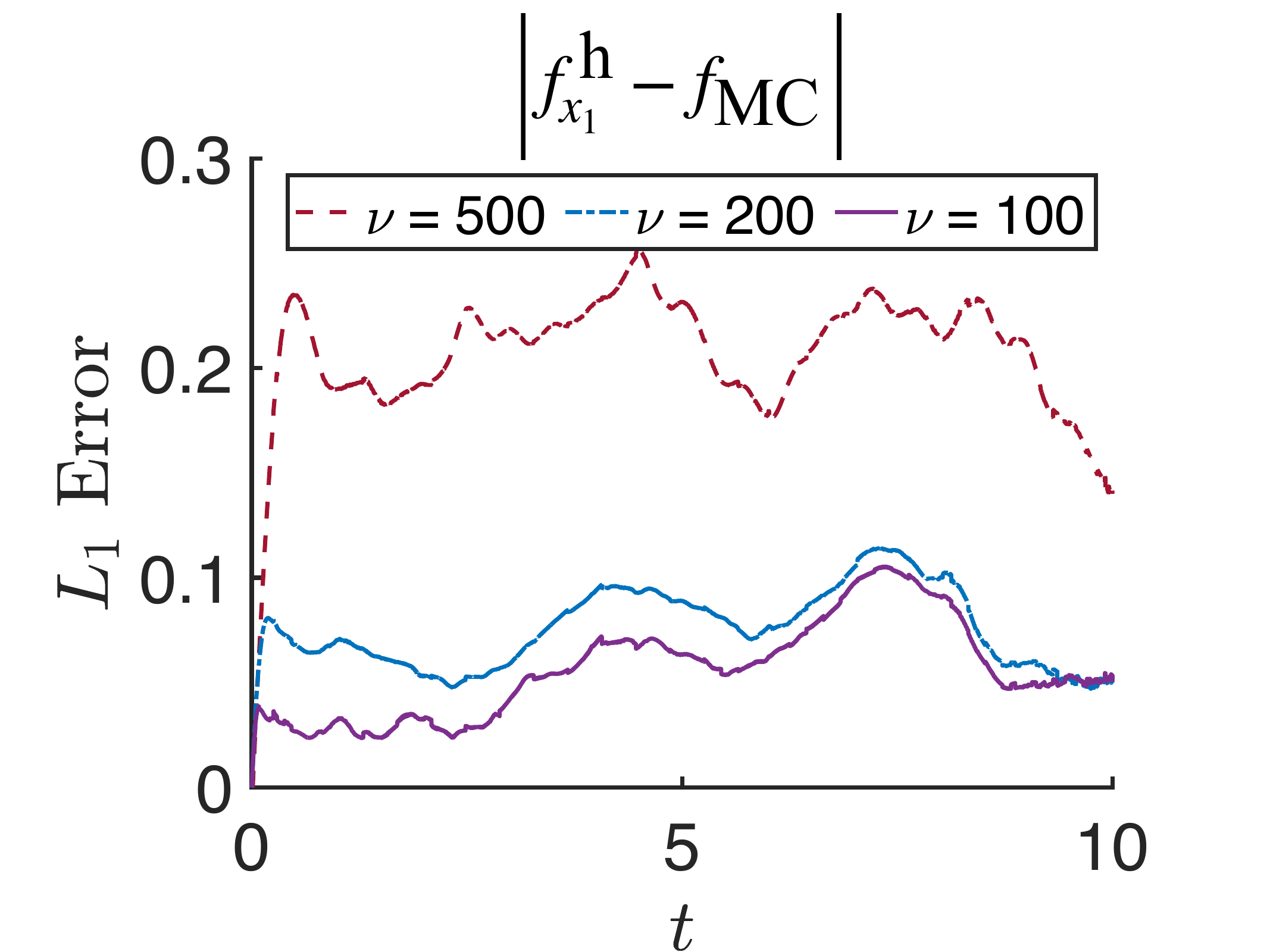}
    \includegraphics[width=.3285\linewidth]{./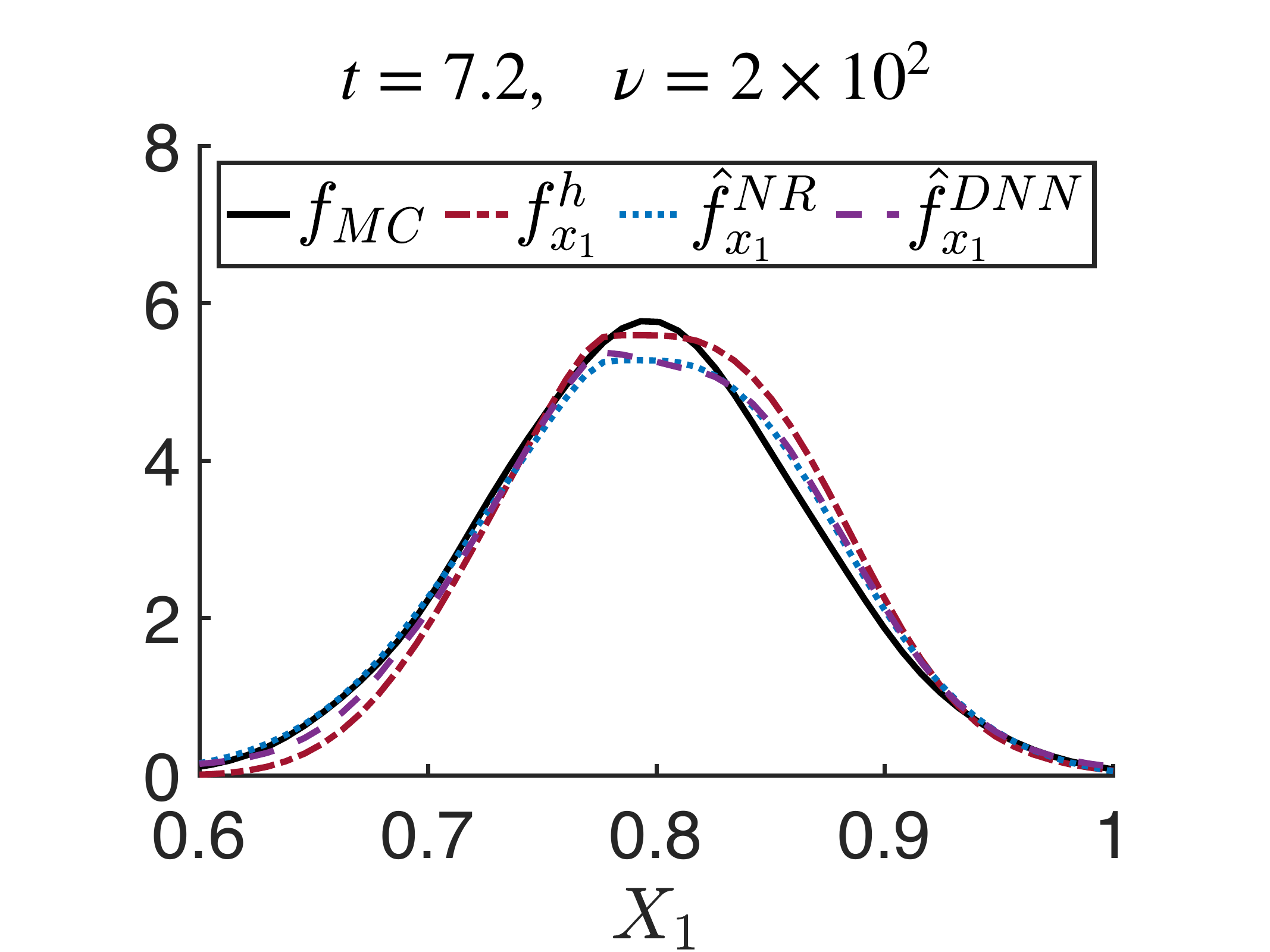}
    \includegraphics[width=.3285\linewidth]{./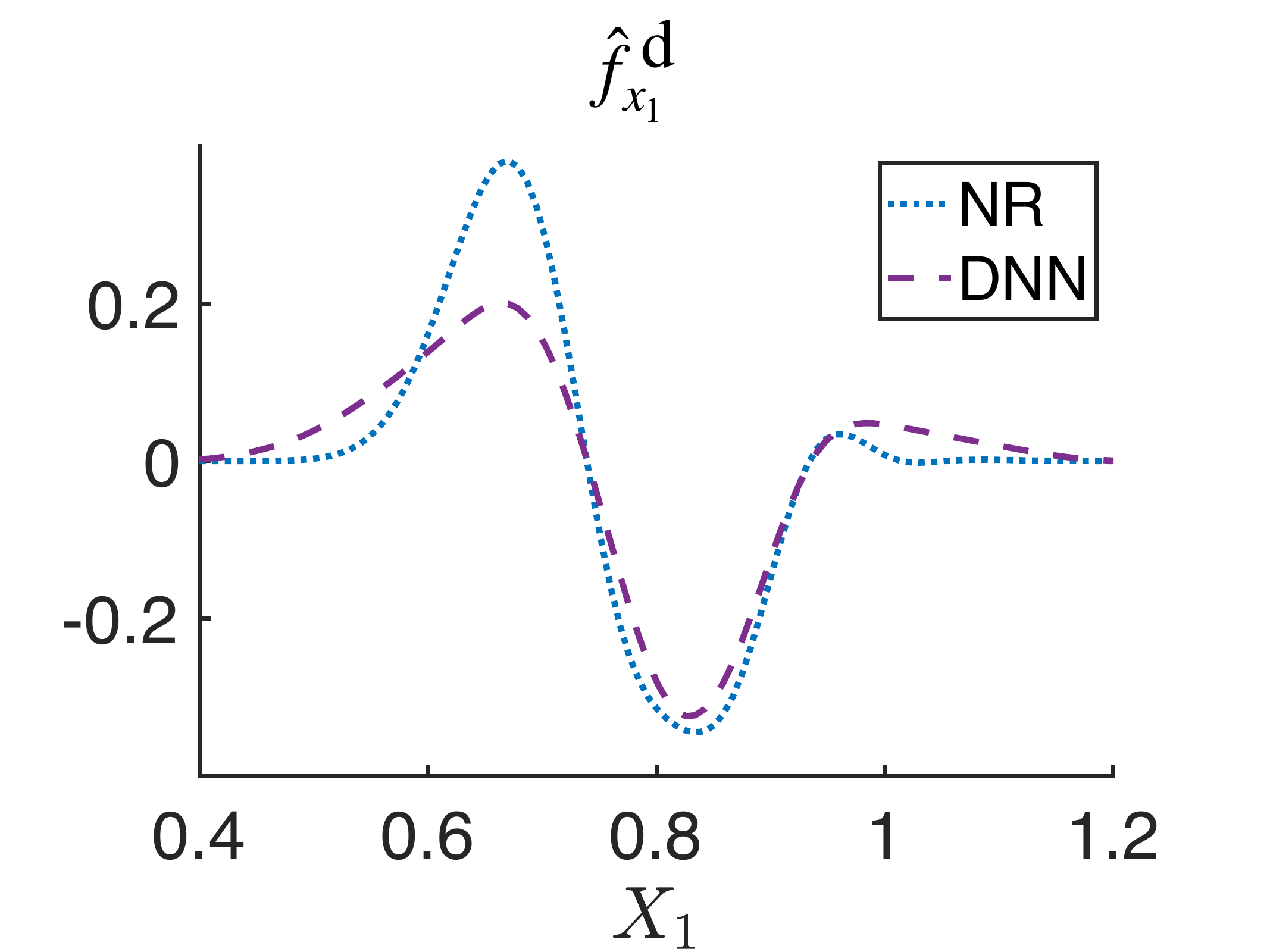}
\caption{(Left) Temporal evolution of $f_{x_1}^\text{h}$ $L_1$ error against the yardstick $f_\text{MC}$ for $\nu\in\{1,2,5\}\times 10^2$ and $N_\text{MC}^\text{tr} = 5\times 10^2$. (Middle) Snapshot of $f_\text{MC}$, $f_{x_1}^\text{h}$, $\hat f_{x_1}^\text{NR}$, and $\hat f_{x_1}^\text{DNN}$ for $\nu=2\times 10^2$ at time $t=7.2$. (Right) Defects $\hat{f}_{x_1}^\text{d}$ corresponding to the middle plot for DNN and NR observers. The latter is computed {\emph{ex post facto}} as $\hat f_{x_1}^\text{NR} - f_{x_1}^\text{h}$.}
\label{fig:lin_sol_snap}
\end{figure}

\Cref{fig:lin_sol_snap} (middle) reveals the RoPDF solutions to be overwhelmingly Gaussian. This is no surprise since \cref{eq:linrode} is linear with Gaussian noise and initial conditions. Moreover, the snapshot of $f_{x_1}^\text{h}$ for $\nu = 2\times10^2$ at time $t=7.2$ is a close match to the yardstick $f_\text{MC}$ even though the $L_1$ error is approximately $11\%$, as seen in left subfigure.  
Upon closer inspection, there are deviations in the mean, variance, and left tail from $f_\text{MC}$, though they appear minimal. This behavior with respect to $f_\text{MC}$ is similar at other times and for other combinations of $\nu$ and $N_\text{MC}^\text{tr}$ but is more pronounced as $\nu$ increases. Although it is pessimistic for PDFs, we limit our error to $L_1$ since theoretical NR convergence is established in this metric (see \Cref{app:nr} and \cite{Boulanger_2015}). However, regardless of the metric, there is always a sharp increase in $f_{x_k}^\text{h}$'s error at early times, where the error magnitude is largely determined by $\nu$. This is expected given the dynamics' initial transience, where the RoPDF quickly transitions away from the initial condition to the dominant periodic evolution seen in \Cref{fig:lin_sol} (right). Naturally, if $\nu$ is too large, even with $\hat\rmc$'s interpolation, the advection coefficient cannot properly account for this transience, and $f_{x_k}^\text{h}$ is perturbed away from the true dynamics without any means for correction, even if the coefficient is correctly estimated at later times. This is where our proposed assimilation methods pick up the slack.

\Cref{fig:lin_nr_err} (left) is the temporal $L_1$ error evolution of the NR observer against the observations used during assimilation, which helps visualize the NR procedure. The tick marks along the horizontal axis denote the relatively few time periods when $\lambda>0$ and RoPDF observations are assimilated into the dynamics via \cref{fnud}. They typically correspond to small magnitudes and sharp decreases in error, showing that observations are assimilated in quickly when it serves to increase predictive power. This figure also
shows the error associated with the NR (middle) and DNN (right) observers against $f_\text{MC}$, revealing that both approaches perform well compared to the homogeneous solution (\Cref{fig:lin_sol_snap}, left). The most striking result is that both observers, save for the initial transience, are relatively unaffected by temporal sparsity as long as $\nu$ is not unreasonably large. This fact can also be seen in our convergence rates in \Cref{fig:lin_scale}. Overall, the DNN slightly outperforms NR, which we contribute to relatively simple error distributions and defects. The latter can be seen in \Cref{fig:lin_sol_snap} (right). 
\begin{figure}[t!] 
	\centering  
  	\includegraphics[width=.3285\linewidth]{./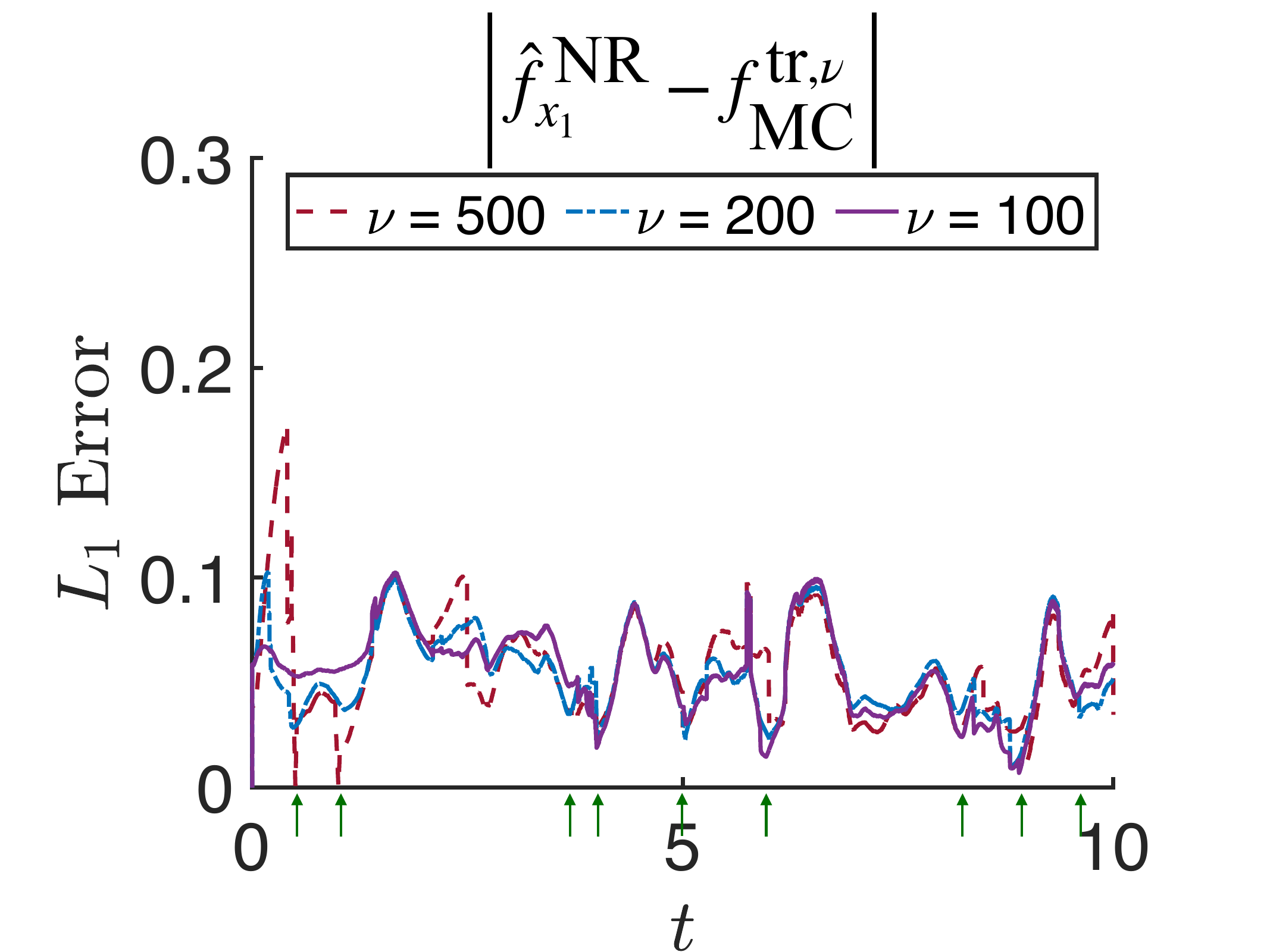}
   \includegraphics[width=.3285\linewidth]{./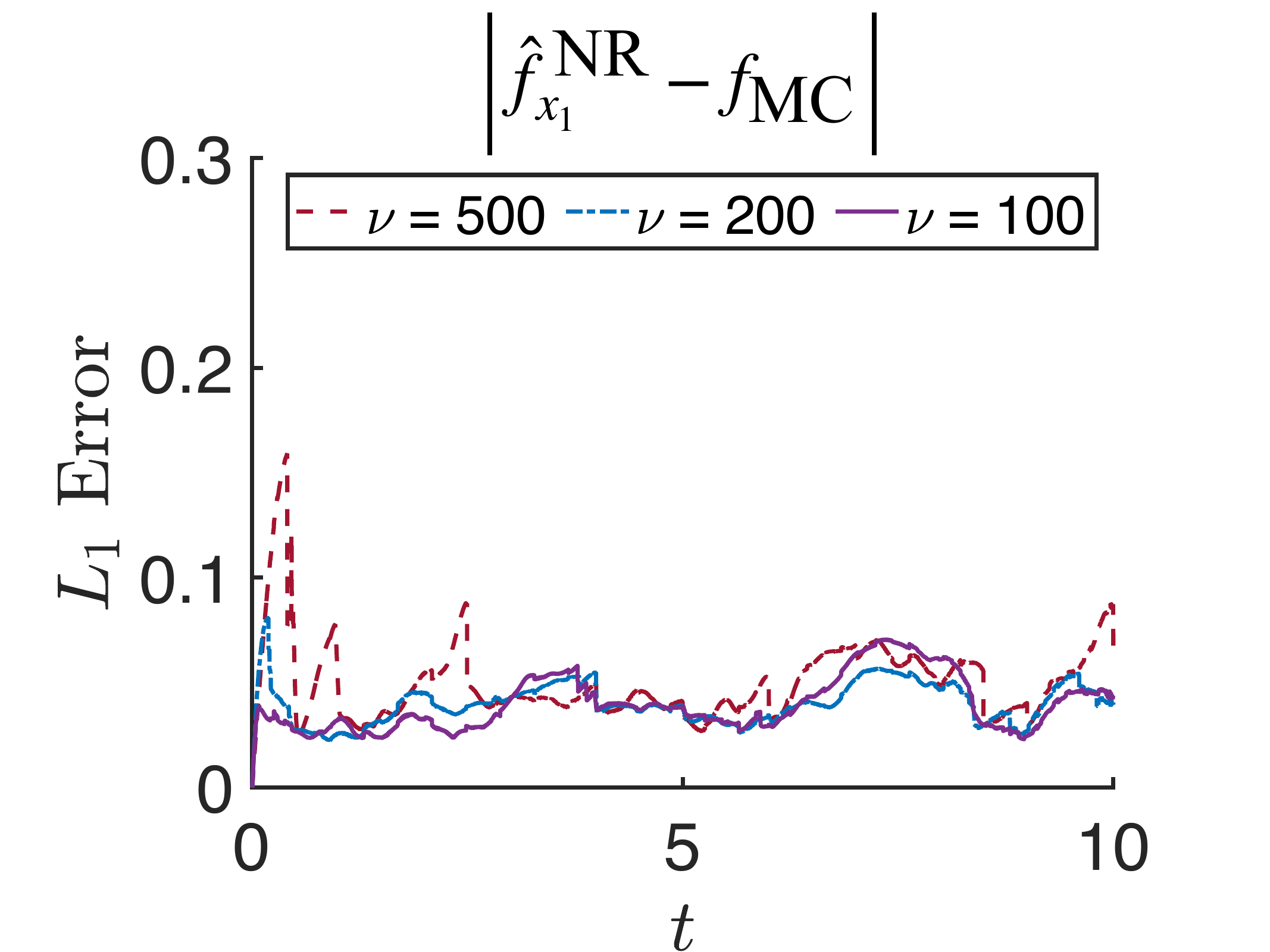}
   \includegraphics[width=.3285\linewidth]{./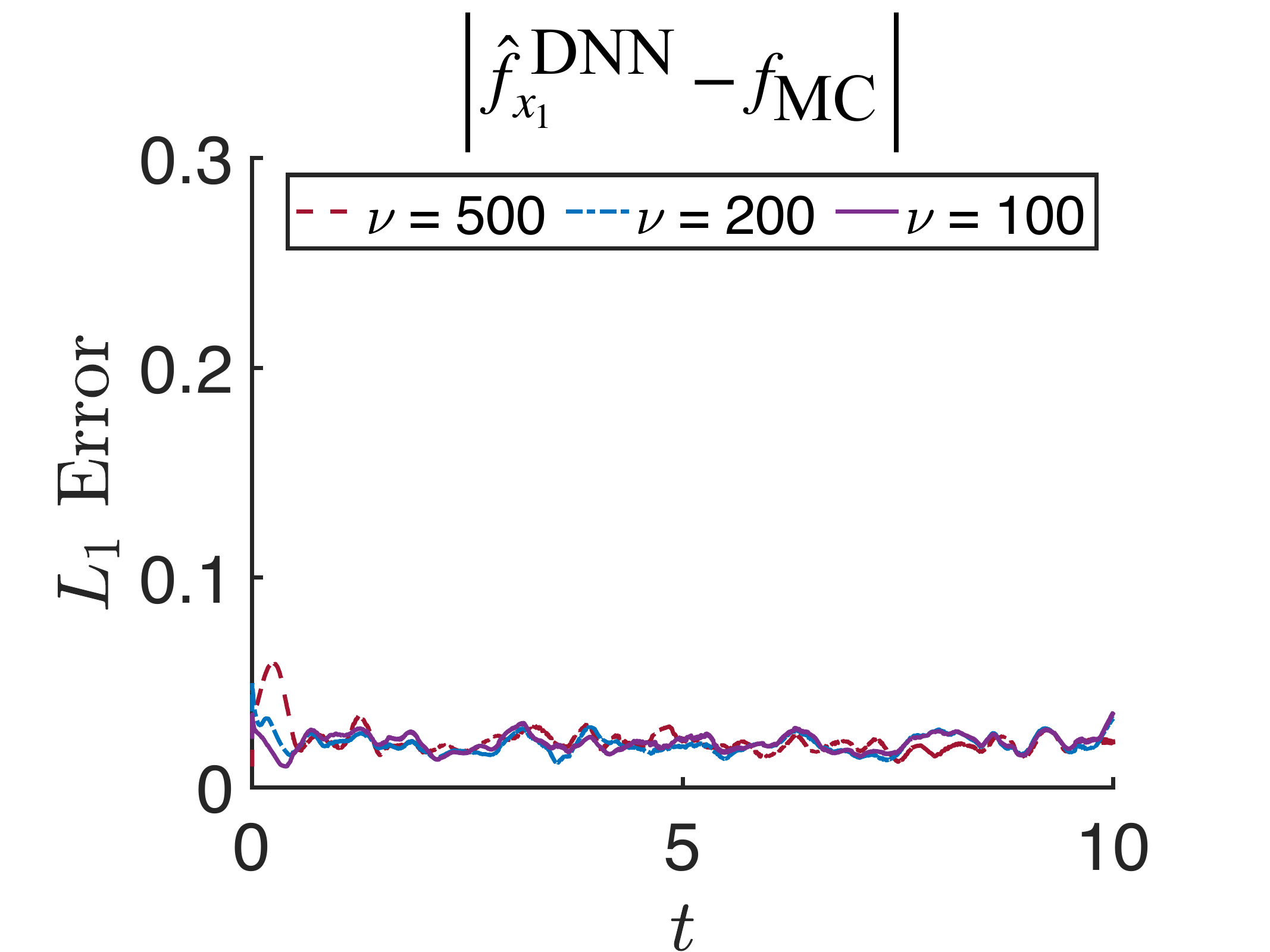}
\caption{Evolution of $L_1$ error for $\nu\in\{1,2,5\}\times 10^2$ and $N_\text{MC}^\text{tr} = 5\times 10^2$. (Left) $\hat f_{x_1}^\text{NR}$ against the assimilated $f_\text{MC}^{\text{tr},\nu}$. Green ticks on the $t$-axis represent short assimilation periods, i.e., when $\lambda_\nu(t)>0$. (Middle) $\hat f_{x_1}^\text{NR}$ against the yardstick $f_\text{MC}$. (Right) $\hat f_{x_1}^\text{DNN}$ against $f_\text{MC}$.}
\label{fig:lin_nr_err}
\end{figure}

\Cref{fig:lin_scale} provides convergence (in the normalized $L_1$ norm over space and time) of the assimilated observations (left), the NR observer (middle), and the DNN observer (right) as $N_\text{MC}^\text{tr}$ increases. For complete data ($\nu=1$), we recover the standard MC convergence rate of $\bo(1/\sqrt{N_\text{MC}^\text{tr}})$ for the observations. However, as data becomes sparse, this convergence considerably degrades due to the observations' construction via interpolation. For $\nu=1\times10^2$, the error of $f_\text{MC}^{\text{tr},\nu}$ increases in magnitude and the convergence slows to $\bo(1/\sqrt[3]{N_\text{MC}^\text{tr}})$. For $\nu>2\times10^2$, the error magnitude continues to increase and the rate is nearly constant. The NR observer, on the other hand, surpasses standard and quasi-MC rates with $\bo(1/N_\text{MC}^\text{tr})$ convergence. The remarkable feat is that this rate is nearly independent of $\nu$. This also holds for the DNN observer, but with slightly smaller magnitudes and sharper rates.

Overall, both approaches to assimilation are effective and cut costs of the MC approach by a factor of 4. This speedup is significant but not drastic given that the MC approach is on the scale of minutes in CPU time, which is due to linear dynamics and low dimensionalilty. Note, experiments were performed with an Apple M2 Max chip in parallel on $12$ CPU cores. 
\begin{figure}[h!] 
	\centering  
  	\includegraphics[width=.3285\linewidth]{./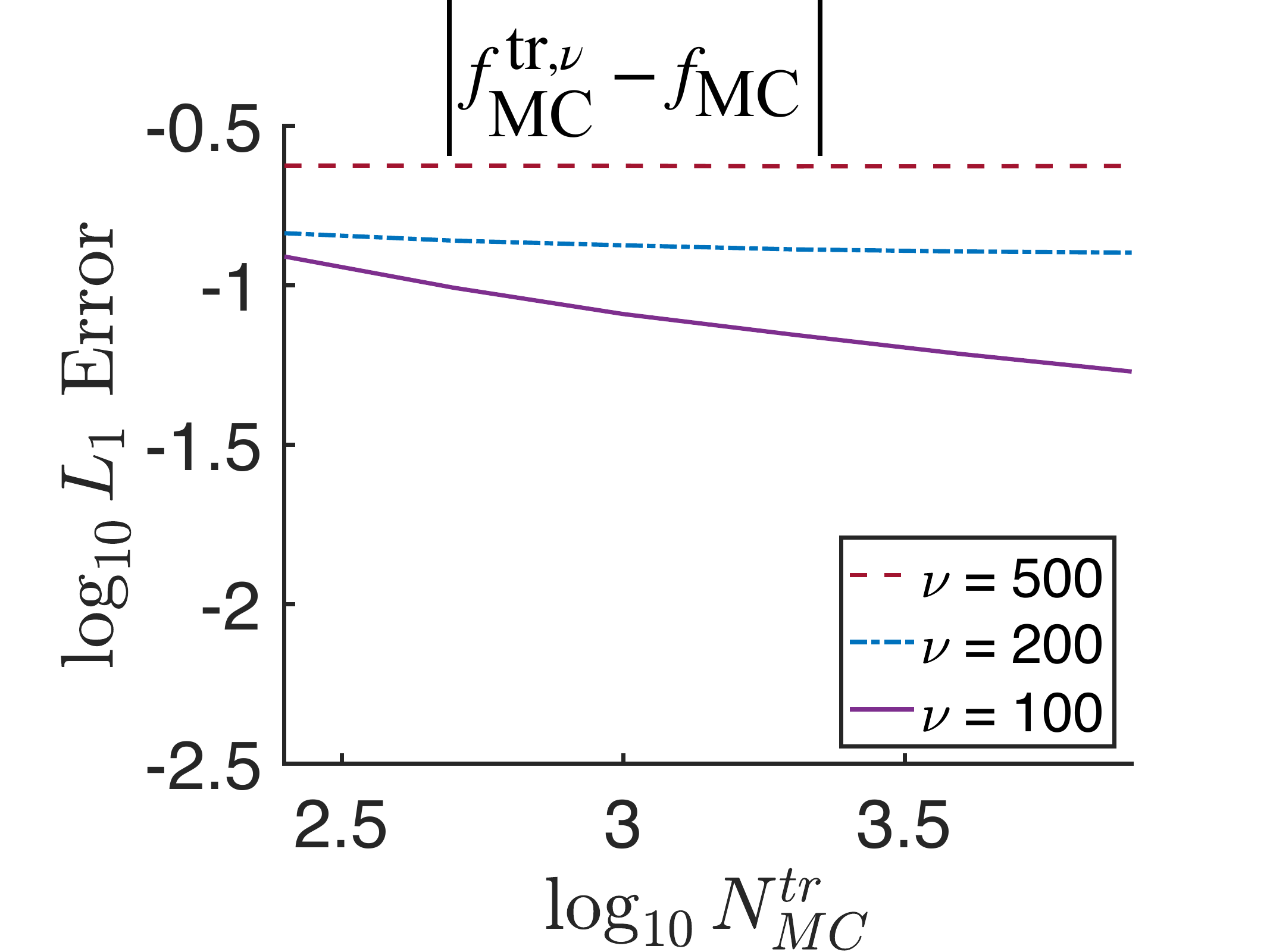}
    \includegraphics[width=.3285\linewidth]{./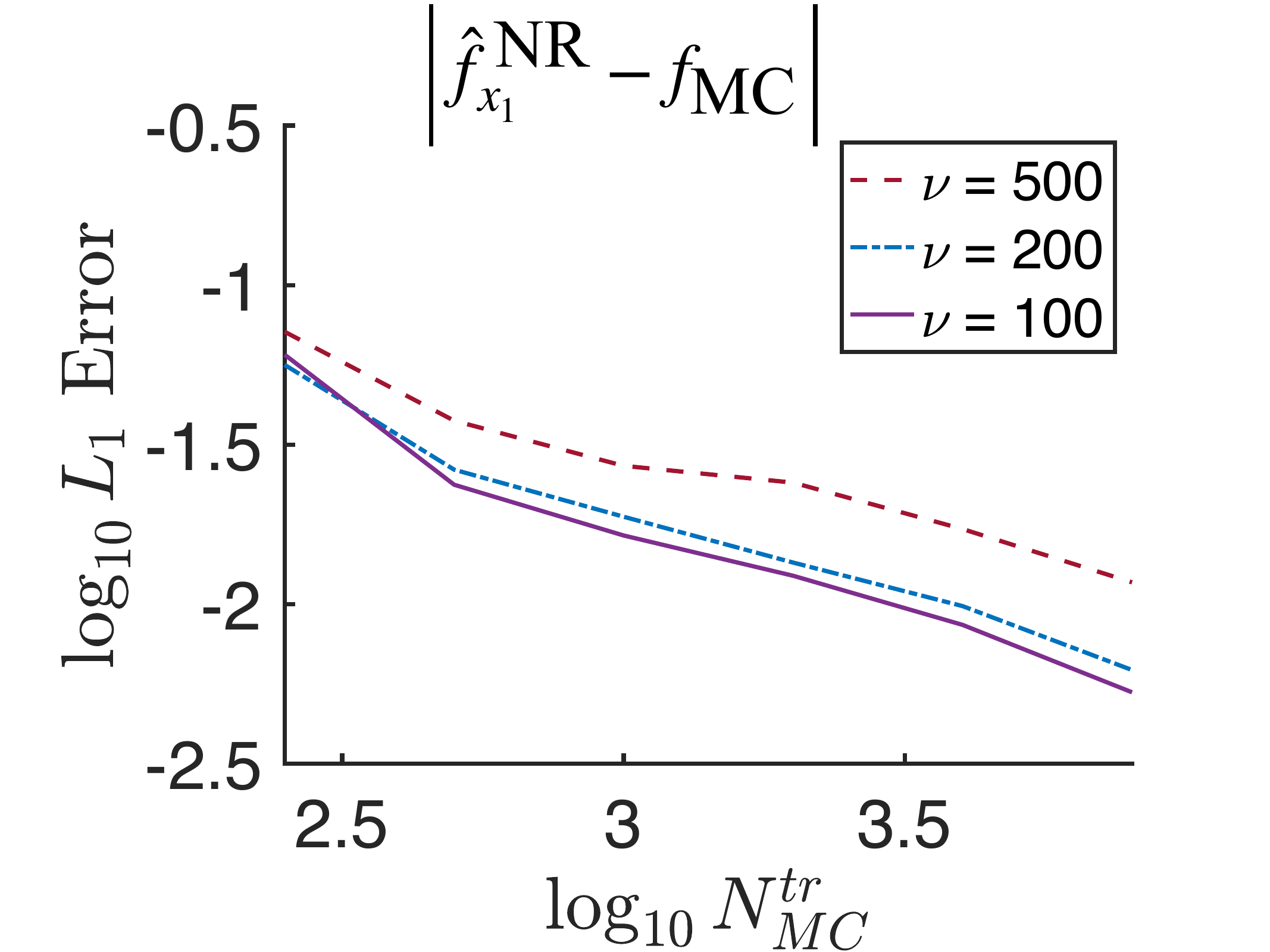}
    \includegraphics[width=.3285\linewidth]{./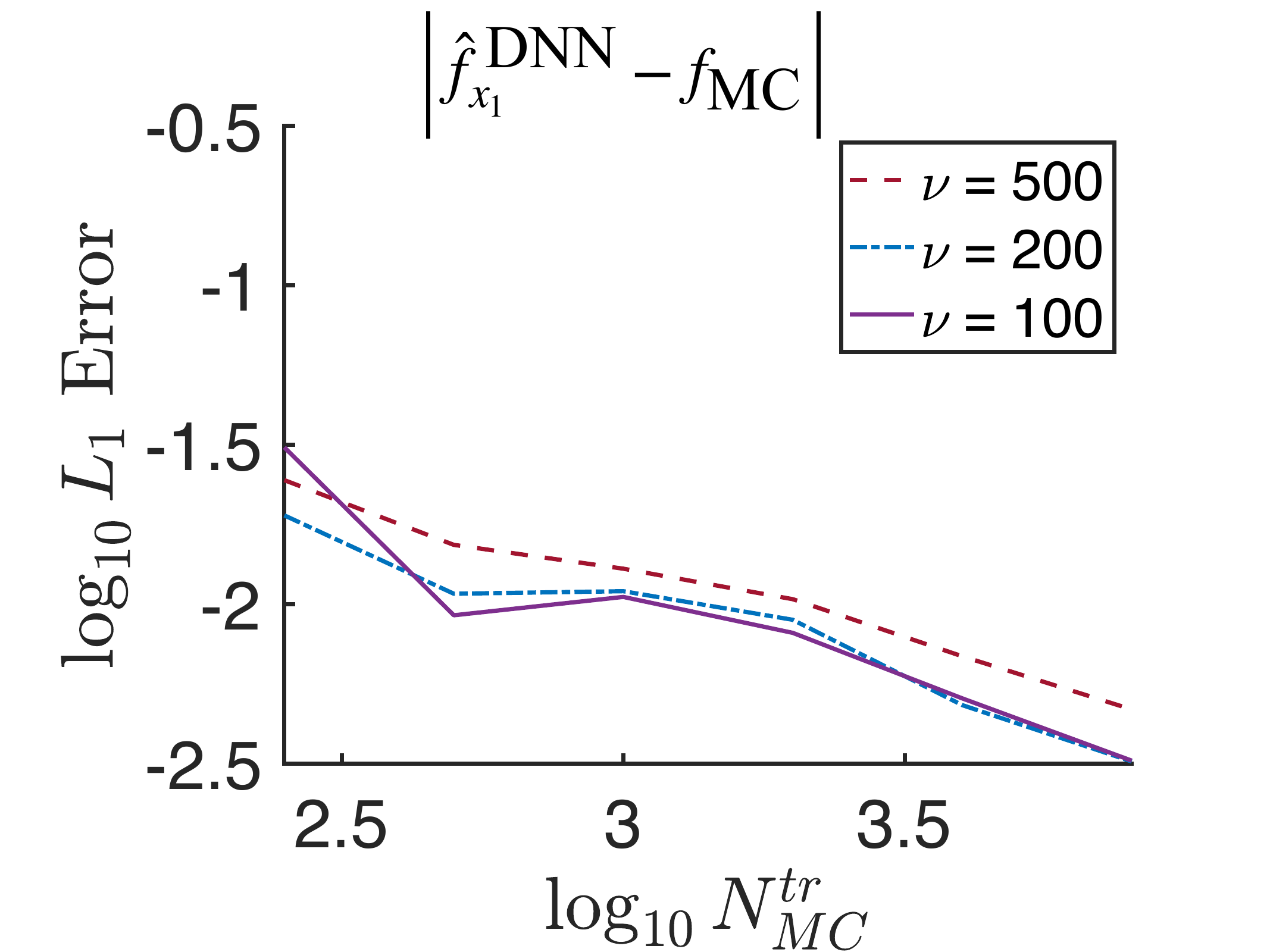}
\caption{Convergence rates, on a log-log scale, for the spatiotemporal $L_1$ error of the observations $f_\text{MC}^{\text{tr},\nu}$ (left), $\hat{f}_{x_1}^\text{NR}$ (middle), and $\hat{f}_{x_1}^\text{DNN}$ (right) for $\nu=\{1,2,5\}\times10^2$ as $N_\text{MC}^\text{tr}$ increases.}
\label{fig:lin_scale}
\end{figure}


\subsection{Power System Cascade Outages}
\label{sec:power}

We study how our method applies to electrical power systems, particularly in characterizing cascading failure modes dependent on stochastic sources. As the grid sees an increase in renewable generation sources and electric vehicles, the risk of stochastic fluctuations triggering a cascade of failures rises, potentially leading to significant economic impacts and safety risks.

The power system consists of $N_\text{bus}=N_\text{g}+N_\text{l}$ buses, comprised of $N_\text{g}$-many generators and $N_\text{l}$-many loads, and a network of $N_\text{line}$-many transmission lines. Sudden perturbations of the steady state can lead to the overloading of transmission lines, resulting in their sudden trip or disconnection. This disconnection may trigger further line disconnections in a cascade fashion. Therefore, to characterize the risk of cascades, the RoPDF method is employed, which can be used to compute the probability of line outages in response to stochastic perturbations.

To address this issue in a computationally tractable fashion, Zheng and DeMarco proposed a port-Hamiltonian model to represent the potential of such cascading outages that incorporates line tripping by means of ``smooth bistable" variables \cite{DeMarco_1987, Zheng_2015phd}. We show the complete model:
\begin{align} \label{pham2}
	\dot{\boldsymbol\w}_g &= -\mathbf M_g^{-1}\mathbf D_g\boldsymbol\w_g - 
	\mathbf M_g^{-1}\U_1^\top\mathbf f(\a,\bv_l,\boldsymbol\gamma), \notag \\
	\dot\a &= \U_1\boldsymbol\w_g -\left[\U_2\mathbf D_l^{-1}
	\U_2^\top\right]\mathbf f(\a,\bv_l,\boldsymbol\gamma), \notag \\
	\dot{\mathbf{V}}_l &= -\mathbf D_v^{-1}\mathbf g(\a,\mathbf V_l,\boldsymbol\gamma), \notag \\
	\dot{\boldsymbol\gamma} &= -\mathbf D_\g^{-1} 
	\mathbf h(\a,\mathbf V_l,\boldsymbol\gamma),
\end{align}
where
\begin{align*}
	\U &\triangleq [-\mathbf e\,|\,\mathbf I_{N_\text{bus}}] = [\U_1\,|\,\U_2],\,\,\U_1\in\R^{N_\text{bus}
	\times(N_\text{g}+1)},\,\,\U_2\in\R^{N_\text{bus}\times N_\text{l}}, 
	\mathbf e \triangleq [1,\dots,1]^\top\in\R^{N_\text{bus}}, 
\end{align*}
and $\mathbf I_{N_\text{bus}}$ is the $N_\text{bus}\times N_\text{bus}$ identity matrix. The system states $\x(t)\triangleq \left[\boldsymbol\w_g^\top, \a^\top, \bv_l^\top, \boldsymbol\g^\top\right]^\top$ are comprised of $(N_\text{g}+1)$ generator speeds ${\boldsymbol\w}_g$ (including a non-physical reference/slack bus), $N_\text{bus}$ non-slack angles $\a$, $N_\text{l}$ load voltage magnitudes $\mathbf V_l$, and $N_\text{line}$ indicator-like bistable variables $\boldsymbol\gamma$ representing the operating status of each line. Hence, \cref{pham2} is an $(N_\text{g}+1+N_\text{bus}+N_\text{l}+N_\text{line})$-dimensional system. Here, $\mathbf M_g\in\R^{(N_\text{g}+1)\times (N_\text{g}+1)}$ is the  generator mass/inertia matrix and $\mathbf D_g\in\R^{(N_\text{g}+1)\times (N_\text{g}+1)}$, $\mathbf D_l\in\R^{N_\text{l}\times N_\text{l}}$, $\mathbf D_v\in\R^{N_\text{l}\times N_\text{l}}$, and $\mathbf D_\g\in\R^{N_\text{line}\times N_\text{line}}$ are the states' various damping matrices. $\mathbf f(\a,\bv_l,\boldsymbol\gamma)\in\R^{N_\text{bus}}$ represents the net power at each of the non-slack buses, where the sign convention takes absorbed power as positive. In other words,
\[ f_i(\a, \bv_l,\boldsymbol\gamma) \triangleq \tilde f_i(\a, \bv_l, \boldsymbol\gamma) - P_i^0, \qquad i\in\{1,\dots,N_\text{bus}\}, \]
where $\mathbf P^0 = [P_1^0, \dots, P_{N_\text{bus}}^0]^\top\in\R^{N_\text{bus}}$
represents the prescribed active mechanical power from the generators and the negative active load power demands. $\mathbf Q^0\in\R^{N_\text{l}}$ is the reactive power analog of $\mathbf P^0$, but defined only at the loads, and $\mathbf g(\a,\bv_l,\boldsymbol\gamma)\in\R^{N_\text{l}}$ is defined as 
\[ g_i(\a, \bv_l,\boldsymbol\gamma) \triangleq V_{l,i}^{-1}\left(\tilde g_i(\a, \bv_l,\boldsymbol\gamma) - Q_i^0\right), \qquad i\in\{N_g+1,N_g+2,\dots,N_\text{bus}\}. \]

The system \cref{pham2} approximates tripping dynamics via the bistable states $\boldsymbol\gamma$ and their corresponding velocity field
\begin{align} \label{eqh}
	\mathbf h(\a,\mathbf V_l,\boldsymbol\gamma) \triangleq \tilde{\mathbf{h}}(\a,\mathbf V_l)
	- \mathbf H\odot\boldsymbol\theta(\boldsymbol\gamma).
\end{align}
The smooth thresholding function
\begin{align} \label{eqth}
	\theta_k(\boldsymbol\gamma) \triangleq 
	2\left[-\exp(-20\g_k)+\exp(-200\g_k)+\exp(20(\g_k-1))-\exp(200(\g_k-1))\right],
\end{align}
for $k\in\{1,\dots,N_\text{line}\}$, is constructed so that upon integrating \cref{pham2}, two potential wells are created very close to zero and one, where the latter has height $\approx H_k$. When the line energy $\tilde{h}_k$ (see \cite[Eq. 3.12]{Zheng_2015phd}) exceeds the threshold $H_k$, $h_k(\a,\mathbf V_l,\boldsymbol\gamma)$ becomes very large, driving $\g_k$ in \cref{pham2} quickly to zero, effectively removing the line from the system. Moreover, due to \cref{eqth}, once $\g_k$ transitions to zero, it stays there, save for small fluctuations around zero.

To account for stochastic fluctuations at the loads, we add $N_p=2N_\text{l}$ OU noise processes $\boldsymbol\xi(t)$ to $\mathbf P^0$ and $\mathbf Q^0$ such that 
\begin{align*}
	f_i(\a, \bv_l,\boldsymbol\g, \boldsymbol\xi) &\triangleq 
	\tilde f_i(\a, \bv_l,\boldsymbol\g) - P_i^0 - \s_{\text{P},i}\xi_{\text{P},i}, \\
	g_i(\a, \bv_l,\boldsymbol\g, \boldsymbol\xi) &\triangleq 
	 V_{l,i}^{-1}\left(\tilde g_i(\a, \bv_l,\boldsymbol\g) - Q_i^0 - \s_{\text{Q},i}
	 \xi_{\text{Q},i}\right), \qquad i\in\{N_\text{g}+1,N_\text{g}+2,\dots,N_\text{bus}\},
\end{align*}
where the components of $\boldsymbol\xi(t) \triangleq \left[ \boldsymbol\xi_\text{P}^\top(t), \boldsymbol\xi_\text{Q}^\top(t)\right]^\top$ are defined by \cref{eq:ou2} and taken to be uncorrelated. We take all noise processes to have identical correlation length of $\tau=10^{-2}$ and set all $\s_{\text{P},i} \approx 2.19$ and $\s_{\text{Q},i}\approx 1.55$. For our experiments below, this puts the RODE in the high-noise regime. Many methodologies that use large deviation arguments to obtain asymptotic transmission failure rates, which typically require the existence of a nice closed-form stationary measure such as a Gibbs measure, usually do not perform well in this setting \cite{Roth_2021}.

All experiments that follow are over the time interval $[0,T_f]$ with $T_f=0.5$ for the IEEE 14-Bus System, giving a 47-dimensional RODE system.
The random initial conditions of the RODE are computed in the same manner as in \cite{MaltbaPES}. That is, an equilibrium point of the deterministic power system is found by solving the optimal power flow via \texttt{MATPOWER} \cite{Zimmerman2011}. The equilibrium point is treated as a deterministic initial condition for the RODE, which is burned in via $N_\text{MC}^\text{tr}$ MC simulations over the entire time horizon. During this burn-in, all tripping thresholds are set to $\mathbf H \equiv 1$ (equivalent to a line rating of $200$ megavolt amperes) so that no lines are tripped. The resulting samples of $\x(T_f)$ are then treated as independent samples of the random initial condition $\x^0$ at time $t=0$, which are used in generating MC realizations of the RODE system over the time interval $(0,T_f]$ as well as post-processed with KDE \cite{Botev_2010} to compute the RoPDF for the QoI at $t=0$. After the noise burn-in period, we perturb the system out of its quasi-equilibrium by manually removing line $15$ at time $t=0$. Additionally, at time $t=0$, we reduce the thresholds for lines $12$ and $17$ to $H_{12} = 0.0135$ and $H_{17} = 0.0125$, respectively, to mimic so-called ``weak lines," which cannot afford normal load flow, that occur in physical power systems under various circumstances, e.g., bad weather.

Following \cite{Zheng_2015phd, Zheng3} (see Table II in the latter), we set $\text{diag}(\mathbf M_g) = 5.3\times10^{-2}$, $\text{diag}(\mathbf D_g) = 5\times 10^{-2}$, $\text{diag}(\mathbf D_l) = 5\times10^{-3}$, and $\text{diag}(\mathbf D_v) = 10^{-2}$, which are the same parameter choices for the experiments in \cite{Roth_2021}. We determined (via convergence studies) that $\text{diag}(\mathbf D_\g) = 10^{-3}$ is the largest possible value that achieves realistic tripping dynamics.


\subsubsection{RoPDF Equation \& Numerics} \label{sec:pownum}

Since we are interested in quantifying the uncertainty concerning line failures in the power grid, we consider the real-valued QoI to be $z(\x(t)) = \g_k(t)$, which represents the operational status of the $k$-th power line. Since the phase space of $\g_k$ is technically unbounded, we let $Z_k\in \R$ represent a variable in its phase space. Following the derivation in \Cref{sec:main}, the exact RoPDF equation for the marginal PDF $f_{\g_k}(Z_k;t)$ of $\g_k(t)$ is given by
\begin{align} \label{gpdfeq}
	&\frac{\d f_{\g_k}}{\d t} + \frac{\d}{\d Z_k}\Big( -D_{\g,kk}^{-1}\left(
	\rmc(Z_k,t) - H_k\theta_k(Z_k)\right)\Big) = 0, \notag \\
	& f_{\g_k}(Z_k;0) = f_{\g_k}^0(Z_k),
\end{align}
with vanishing boundary conditions, where the regression function is the conditional expectation
$\rmc(Z_k,t)\triangleq\left\langle\tilde h_k(\a,\bv_l)\, \big | \,\g_k(t)=Z_k\right\rangle$.
Since the thresholding function $\theta_k$ depends only on the QoI $\g_k$, the advection coefficient in \cref{gpdfeq} is partially separable, and thus $\theta_k(Z_k)$ has been pulled out of the conditional expectation. In the experiments that follow,  $\rmc$ is always estimated by $\hat\rmc$ via GLLR for each $t_{m_l}\in\mathbf T_\nu$. In all MC simulations, three lines underwent tripping dynamics, including both weak lines. Out of these three, the RoPDF for line $12$ had the most complex dynamics. Hence, we limit our presentation to the $k=12$ case.

As seen in \Cref{fig:pow_sol} and \Cref{fig:powpdfs}, the dynamics of \cref{gpdfeq} transition the RoPDF from unimodal to bimodal, where the essential support of the modes is quite small. To accurately capture these dynamics, we take the spatial mesh $\Z_{12}$ to be fixed but nonuniform with $\Delta Z_{12}$ ranging from $10^{-4}$ near the mode locations $Z_{12}\approx0$ and $1$ to $5\times 10^{-2}$ in between the modes, resulting in approximately $850$ grid cells. If uniform time stepping is used for the Lax-Wendroff discretization, the CFL condition requires $\Delta t =10^{-6}$. Even though variable time stepping and/or different PDE discretizations may be used to reduced the number of time steps, leaving $\Delta t$ uniform in our discretization serves to demonstrate one way in which temporal sparsity can arise. Another comes from the MC simulations and the RODE discretization. For the given stiff power system, an explicit RODE discretization would require time steps as small as $10^{-9}$ to ensure stability. Our strong-order implicit time stepping can be taken much larger, but a small
$\Delta t=10^{-5}$ to $10^{-4}$ is still required to accurately capture quick transitions during tripping dynamics. However, to reduce memory requirements, we only store the MC training samples at time increments of $10^{-3}$. Hence, the sparsity factor with respect to the RODE discretization is $10$ to $10^2$ but is $10^3$ compared to the PDE discretization. Given our choice of notation, our sparsity factor $\nu$ refers to the latter, i.e., $\nu=10^{3}$. For this application, we do not consider additional sparsity factors since $\nu=10^3$ is considerably large for the given dynamics, and it has arisen naturally due to memory limitations. Similar to our presentation in \Cref{sec:lin}, we limit our head-to-head comparisons to a single sample size of $N_\text{MC}^\text{tr} = 2\times10^3$, but vary this samples size to determine overall convergence rates. The total number of MC trials required to compute the yardstick solution $f_\text{MC}$ for $\g_{12}(t)$ is $N_\text{MC}=10^5$.

 \begin{figure}[h!] 
	\centering  
	\begin{subfigure}{.33\textwidth}
  		\centering
  		\includegraphics[width=1\linewidth]{./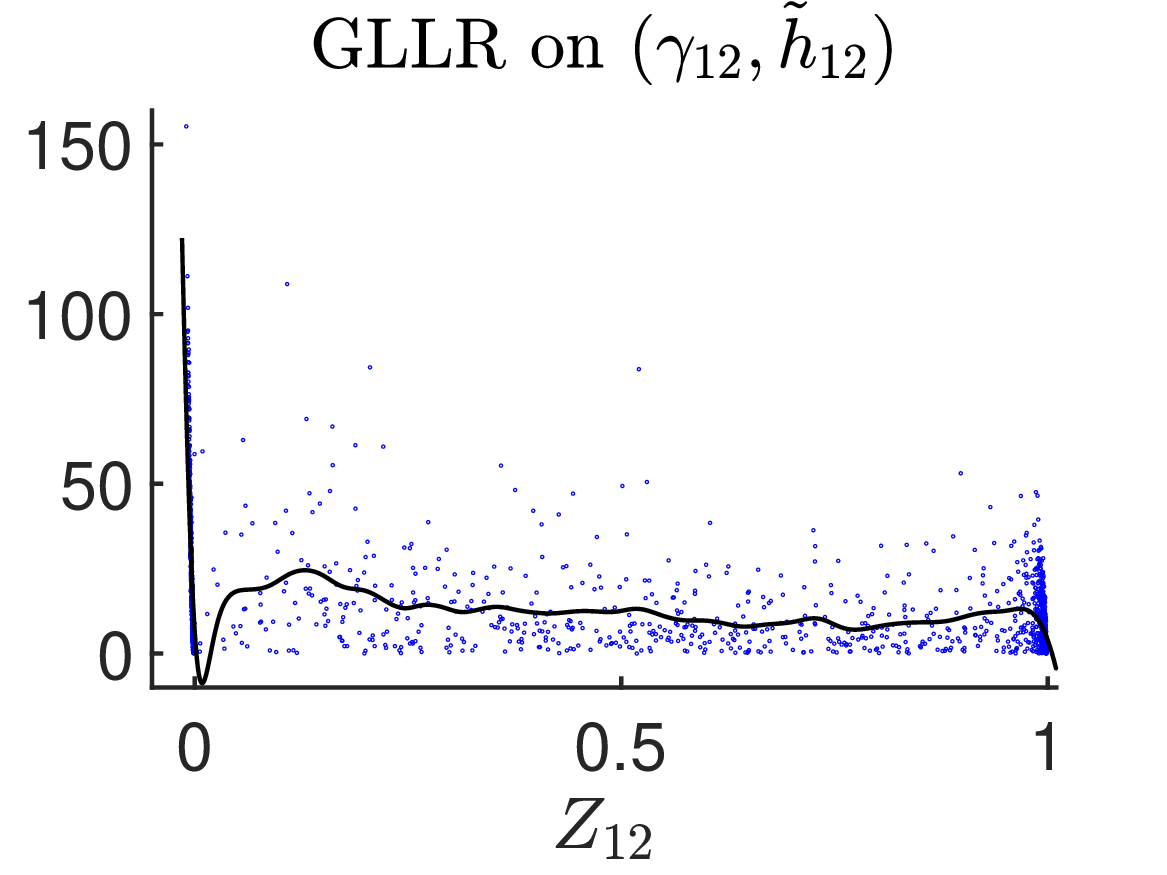}
	\end{subfigure}%
	\begin{subfigure}{.33\textwidth}
 		 \centering
  		\includegraphics[width=1\linewidth]{./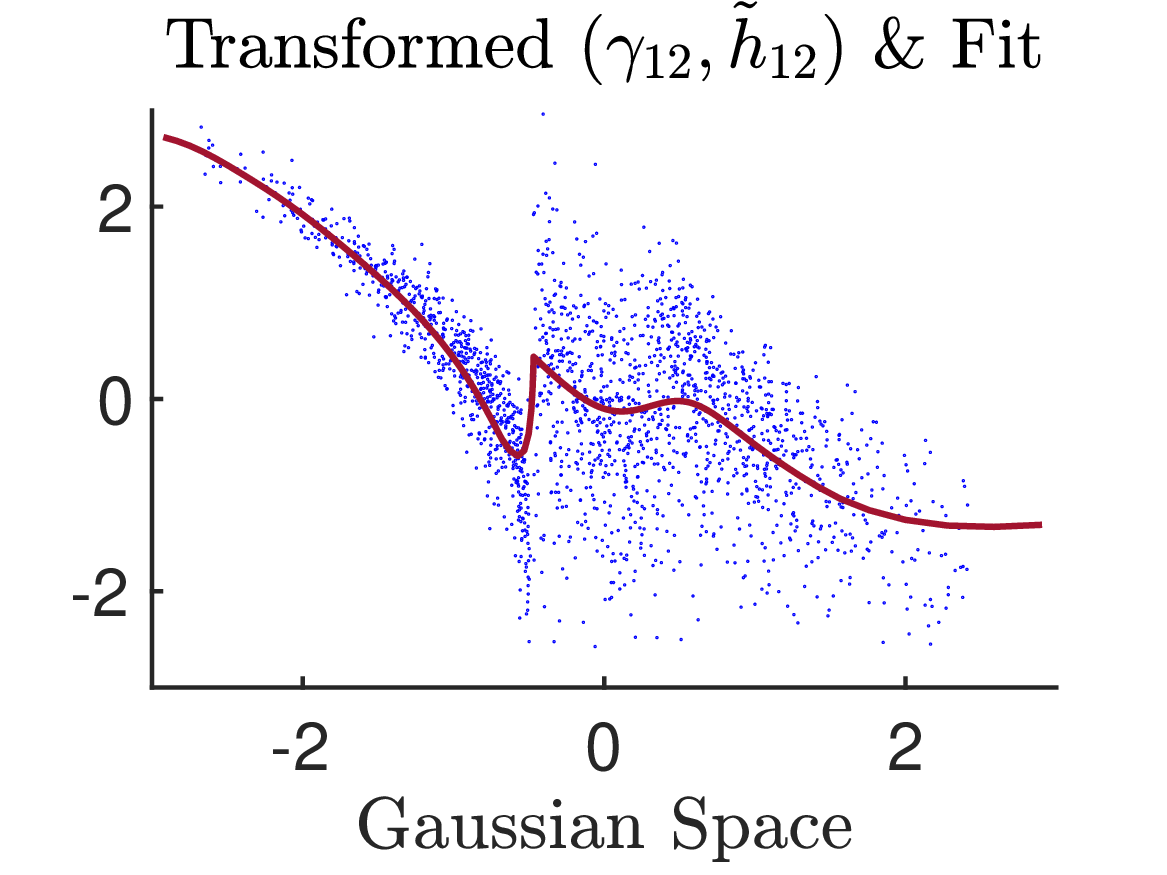} 
	\end{subfigure}%
	\begin{subfigure}{.33\textwidth}
 		\centering
  		\includegraphics[width=1\linewidth]{./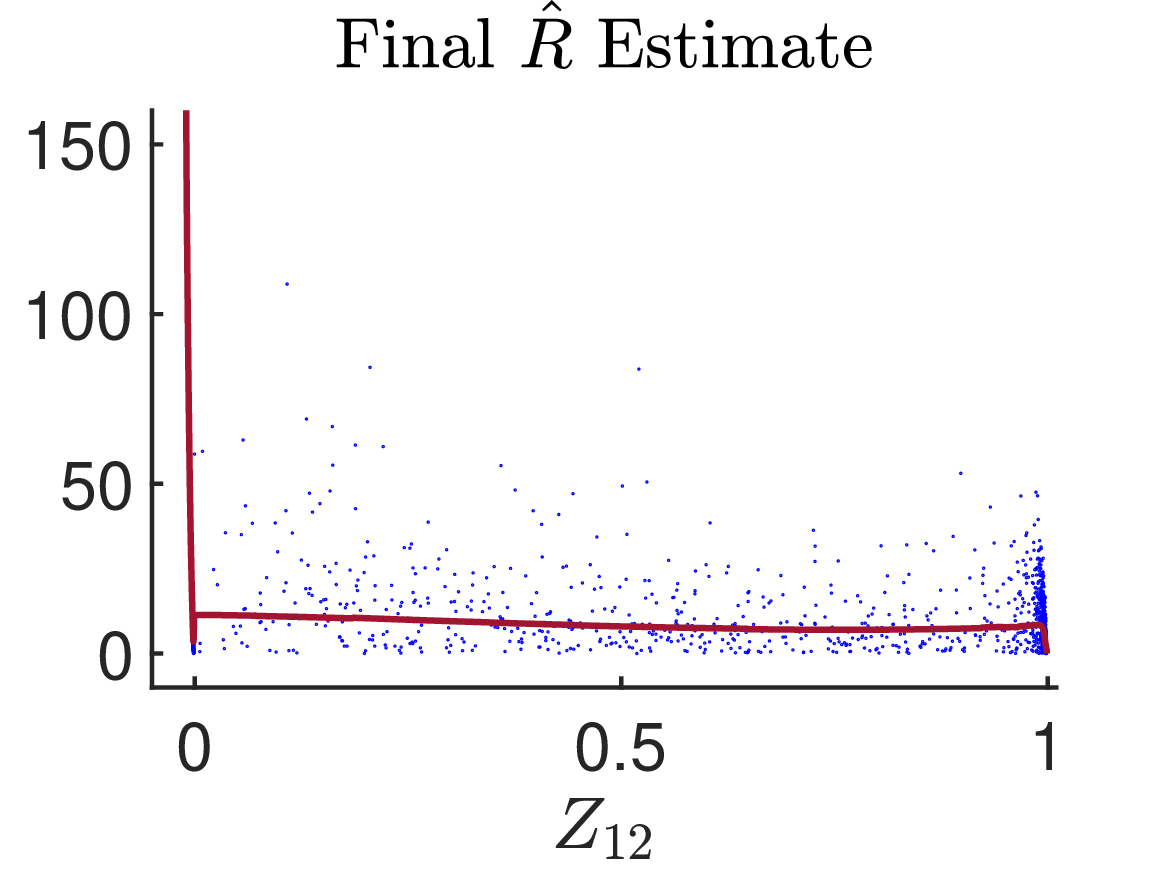}
  	\end{subfigure}
\caption{$N_\text{MC}^\text{tr} = 2\times 10^3$ MC realizations (blue dots) of $(\gamma_{12}, \tilde{h}_{12})$ at time $t=0.1$. (Left) GLLR estimate of $\hat\rmc$ using $10$-fold CV for bandwidth selection. (Middle) Data transformed to standard Gaussian variates and corresponding GLLR fit with simple plug-in bandwidth. (Right) Fit from the middle plot transformed back to the original scale to obtain a more accurate estimate of $\hat\rmc$.}
\label{fig:transform}
\end{figure}

\subsubsection{Regression} \label{sec:reg}

Regarding the regression estimates $\hat \rmc$, when the $k$-th line in the system is tripped and $\g_k$ is the QoI, the underlying dynamics make regression on the MC sample data difficult. However, at any given time, the response (line energy) data $\tilde h_k(\a,\bv_l)$ is always nonnegative and nicely right-skewed, allowing these variates to be efficiently transformed into standard normal variates via the one-parameter Box-Cox transformation. The transformation parameter is selected via maximum likelihood estimation with the Shapiro-Wilk goodness-of-fit test as was done in \cite{Asar}. The qualitative behavior of the underlying predictor data $\g_k$ associated with $\hat\rmc$ drastically changes during the transition period after the line is tripped, and therefore no single parametric transformation can be expected to perform well. Since we must already compute the observations $f_\text{MC}^{\text{tr},\nu}$ for the assimilation procedures, we can easily convert these PDFs into CDFs via inexpensive quadrature. Evaluating these CDFs at the $N_\text{MC}^\text{tr}$-many variates of $\g_k$ via 1D interpolation gives approximately uniform variates on $[0,1]$. Applying the inverse standard normal CDF to these variates gives approximately standard Gaussian variates. Given that the predictor and response data have each been transformed to (univariate) standard Gaussian variates, an optimal plug-in bandwidth estimator for Gaussian data can be employed for GLLR \cite[Ch. 3]{Bowman1997}, avoiding costly CV procedures. The optimal, robust estimator is given by $\hat s_k\triangleq \sqrt{\hat s\{\g_k\}\hat s\{\tilde h_k\}}$, where $\hat s\{\cdot\}$ is defined as 
\begin{equation} \label{eq:bw}
	\hat s\{y\} \triangleq \left(\frac{4}{3N_\text{MC}^\text{tr}}\right)^{0.2}
	\text{med}(|y-\text{med}(y)|)\,/\,0.6745.
\end{equation}

\Cref{fig:transform} (left) displays $N_\text{MC}^\text{tr} = 2\times 10^3$ MC realizations (blue dots) of $(\gamma_{12}, \tilde{h}_{12})$ at time $t=0.1$. On this original scale, the data near $Z_{12}\approx0$ (the RoPDF's left mode) nearly forms a vertical line. In order to remotely capture this behavior with GLLR, $\hat\rmc$ becomes negative and too rough between the modes, even with $10$-fold CV for bandwidth selection (black curve). However, after applying the transformations and performing GLLR with the much cheaper plug-in bandwidth (middle), the fit becomes excellent. The inverse transforms are applied to obtain a much cheaper and more accurate $\hat\rmc$ (right). Its temporal evolution, in addition to the full advection coefficient, is given in \Cref{fig:pow_sol}.

\begin{figure}[h!] 
	\centering
	\begin{subfigure}{.33\textwidth}
  		\centering
  		\includegraphics[width=1\linewidth]{./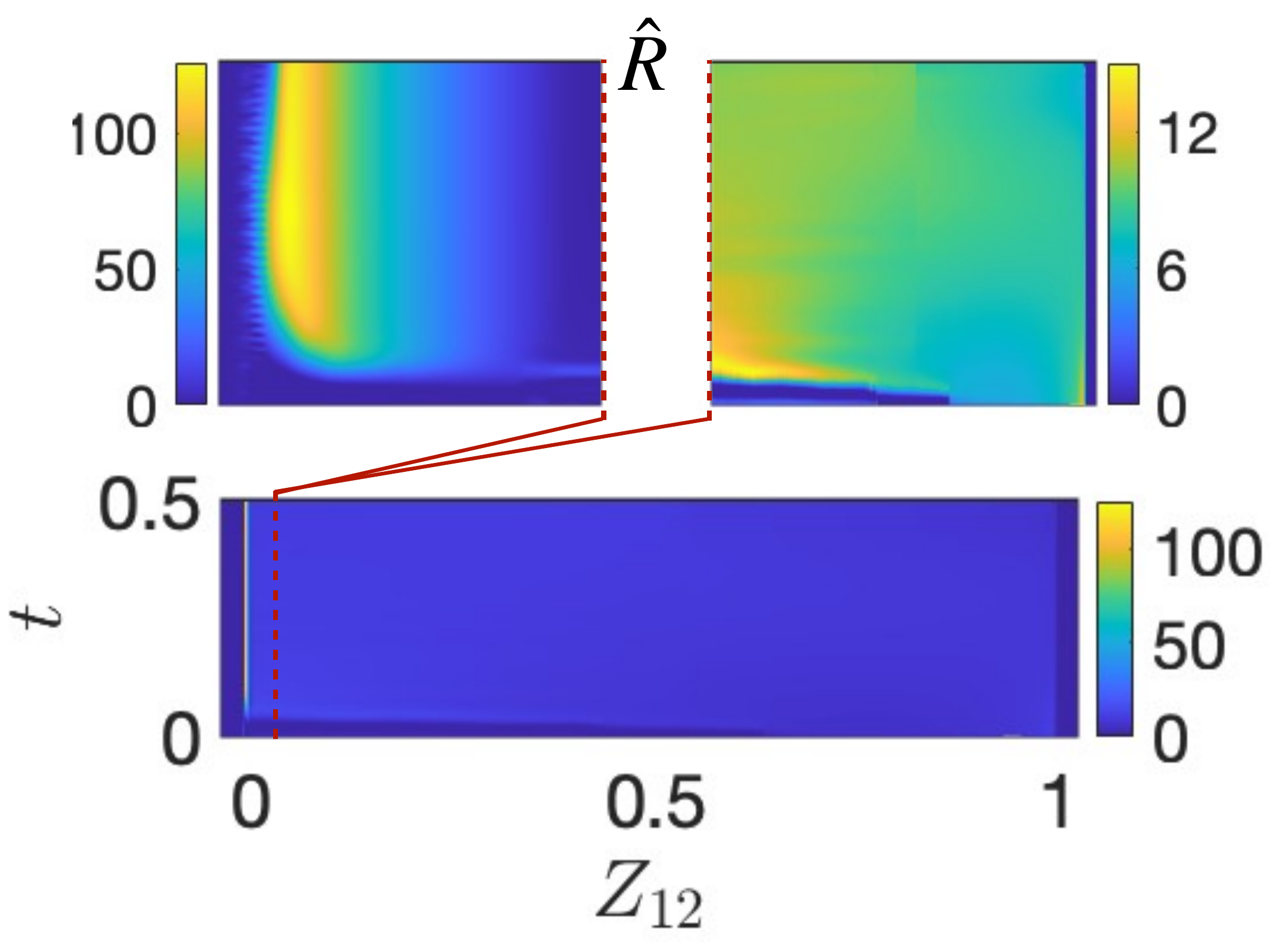}
	\end{subfigure}%
	\begin{subfigure}{.33\textwidth}
 	 	\centering
 		 \includegraphics[width=1\linewidth]{./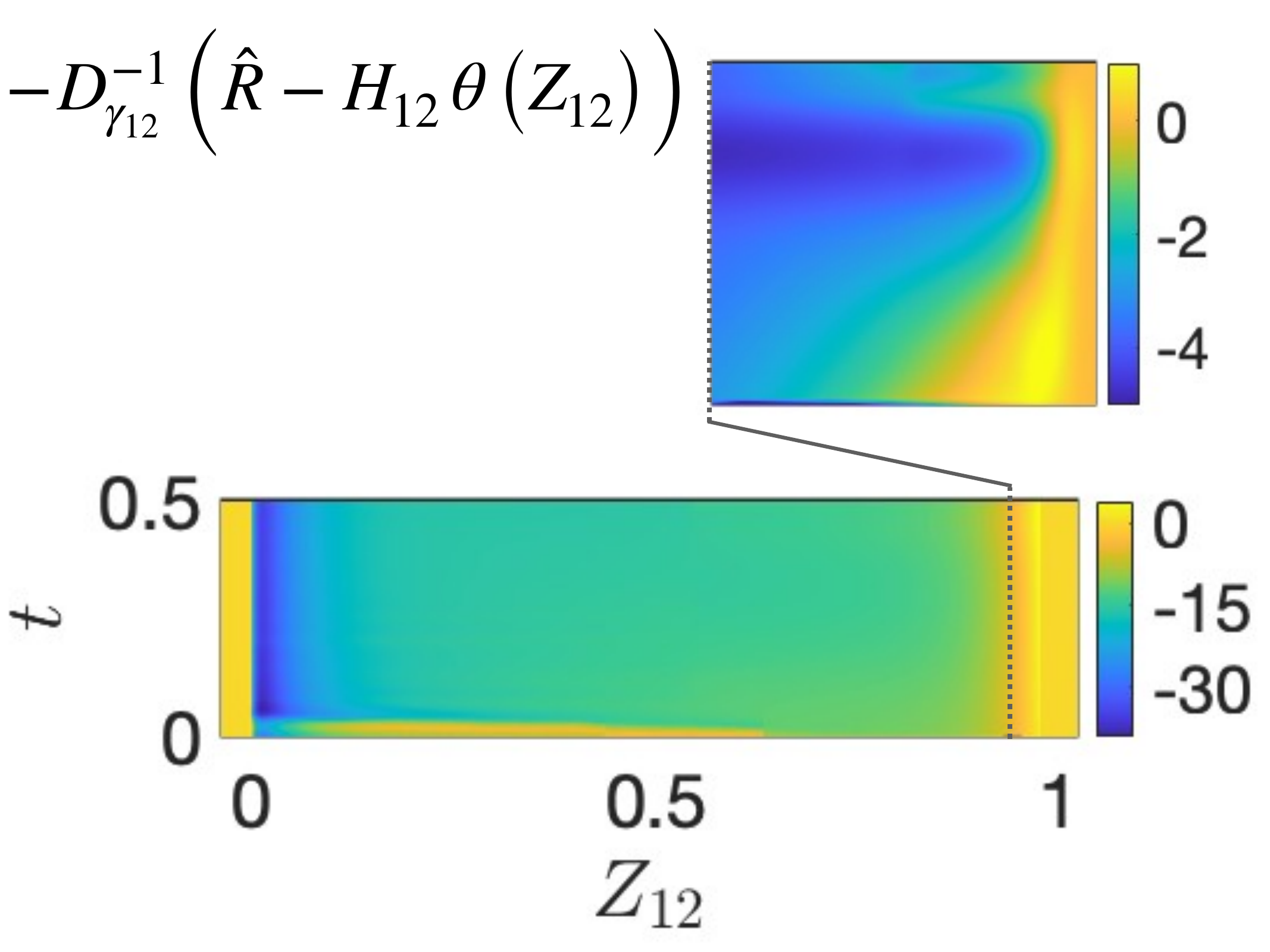}
	\end{subfigure}%
	\begin{subfigure}{.33\textwidth}
  		\centering
 	 \includegraphics[width=1\linewidth]{./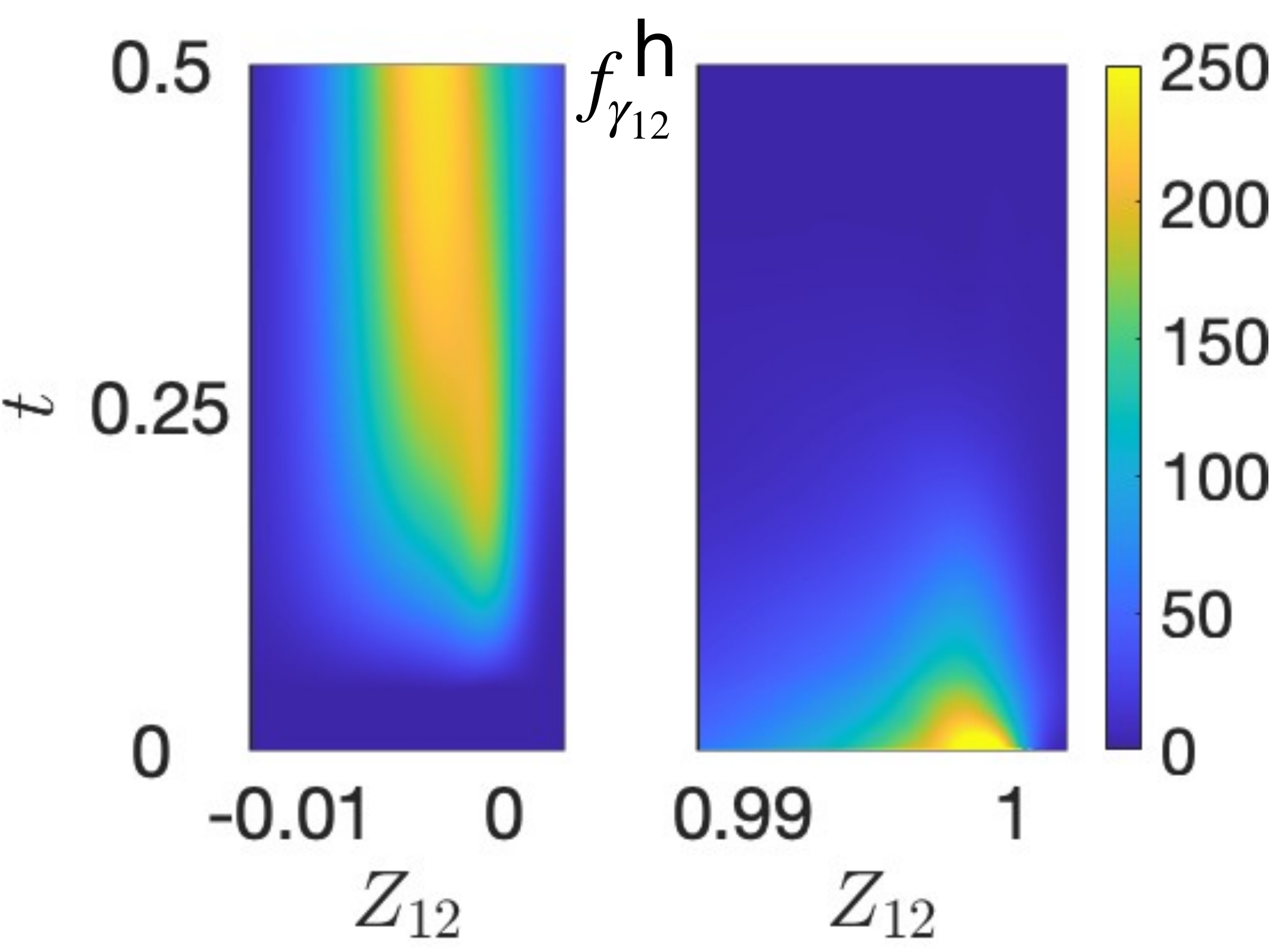}
	\end{subfigure} 
\caption{(Left) Learned regression function $\hat\rmc(Z_{12},t)$ estimated from $N_\text{MC}^\text{tr} = 2\times 10^3$ MC realizations of equation (\ref{eqh}) at sparse observation times $\mathbf T_\nu$ with $\nu=10^3$. The behavior of $\hat\rmc$ is difficult to distinguish using a single scale (bottom). Hence, we partition the domain about the (red dashed) line $Z_{12}\approx 0$ and plot the left and right sides on difference scales (top). (Middle) The partially separable advection coefficient based on the learned $\hat\rmc$. Likewise, the domain is split about the (gray dotted) line $Z_{12}\approx 0.95$. The left side is omitted on top as it closely resembles the bottom plot. (Right) Evolution of the homogeneous RoPDF $f_{\gamma_{12}}^\text{h}$ for $\gamma_{12}(t)$. The evolution is displayed only near the phase space boundaries since the middle portion of the domain always corresponds to relatively low probability states.}
\label{fig:pow_sol}
\end{figure}


\subsubsection{Error Analysis}
\label{sec:pow_err}

The temporal evolution of the solution $f_{\g_{12}}^\text{h}$ to the homogeneous equation \cref{gpdfeq} with estimated $\hat\rmc$ is given in \Cref{fig:pow_sol} (right) as well as snapshots at times $t=5\times10^{-3}$, $0.1$, and $0.4$ in \Cref{fig:powpdfs}. At $t=0$, $f_{\g_{12}}^\text{h}$ is unimodal and nearly symmetric around its original deterministic equilibrium point, indicating that the line is fully operational. Due to the line's low power rating together with stochastic fluctuations at the loads, as time evolves, the probability of transmission failure increases and quickly skews the density left. The small value of $\mathbf D_\g$ causes the RoPDF to transition extraordinarily fast to form a new mode near $Z_{12}\approx0$---its mass is approximately the line's failure probability at any given time. The scale at which this transience occurs in addition to the complexity of $\rmc$ from multiphysics is precisely why the accuracy of $f_{\g_{12}}^\text{h}$ degrades when data is temporally sparse. This was also true for the linear system in \Cref{sec:lin}; however, it is much more apparent in this application. Due to the nature of $\hat\rmc$ near zero (see \Cref{fig:transform} (right) and \Cref{fig:pow_sol} (left)), small shifts away from the true $\rmc$ can result in large changes in magnitude, which is exacerbated by interpolation. Moreover, $\rmc$ also rapidly oscillates in time (on small scales), which is not captured well by the estimate $\hat\rmc$ due to data sparsity. The combined difficulties introduce error to the homogeneous solution that compounds over time, as seen in the RoPDF snapshot \Cref{fig:powpdfs} (right) and the $L_1$ error evolution in \Cref{fig:pow_err} (middle).

Also seen in \Cref{fig:powpdfs}, the nudged and DNN observers perform well over the timecourse compared to the homogeneous solution. Both observers capture all qualitative aspects of the yardstick solution exceptionally well during early and middle times. At late times (right), both struggle to fully capture the right mode; however, this can be contributed to the vastly differing scales of the two modes. Note, these modes at $t=0.4$ (right) are displayed on different scales to emphasize this discrepancy at the right mode. Given that the mass of the right mode is only a fraction of the RoPDF's total mass, such discrepancies do not significantly influence either observer's error against the yardstick MC solution, as seen in \Cref{fig:pow_err} (middle). 

\begin{figure}[t!] 
	\centering
	\begin{subfigure}{.33\textwidth}
  		\centering
  		\includegraphics[width=1\linewidth]{./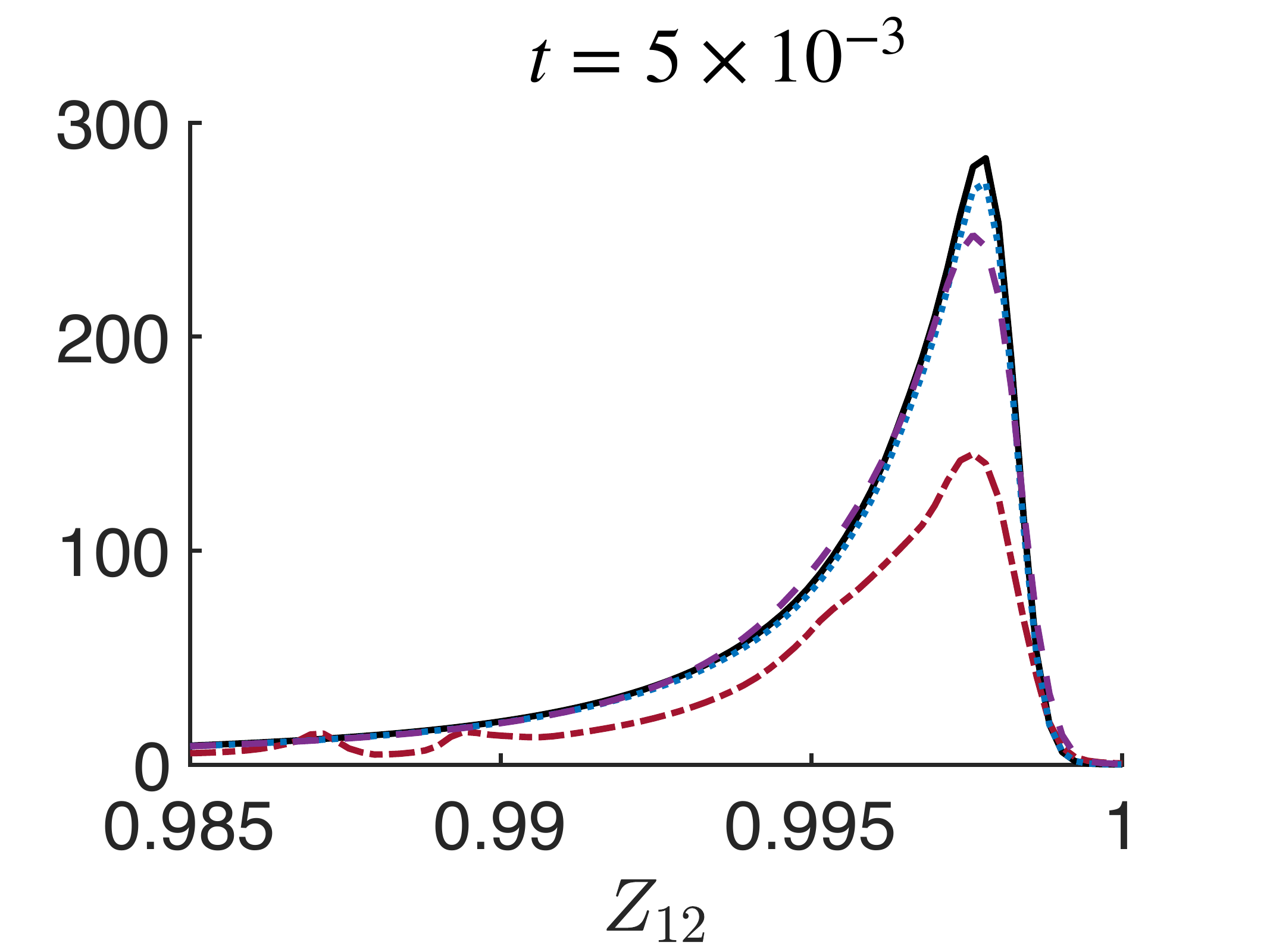}
	\end{subfigure}%
	\begin{subfigure}{.33\textwidth}
 	 	\centering
 		 \includegraphics[width=1\linewidth]{./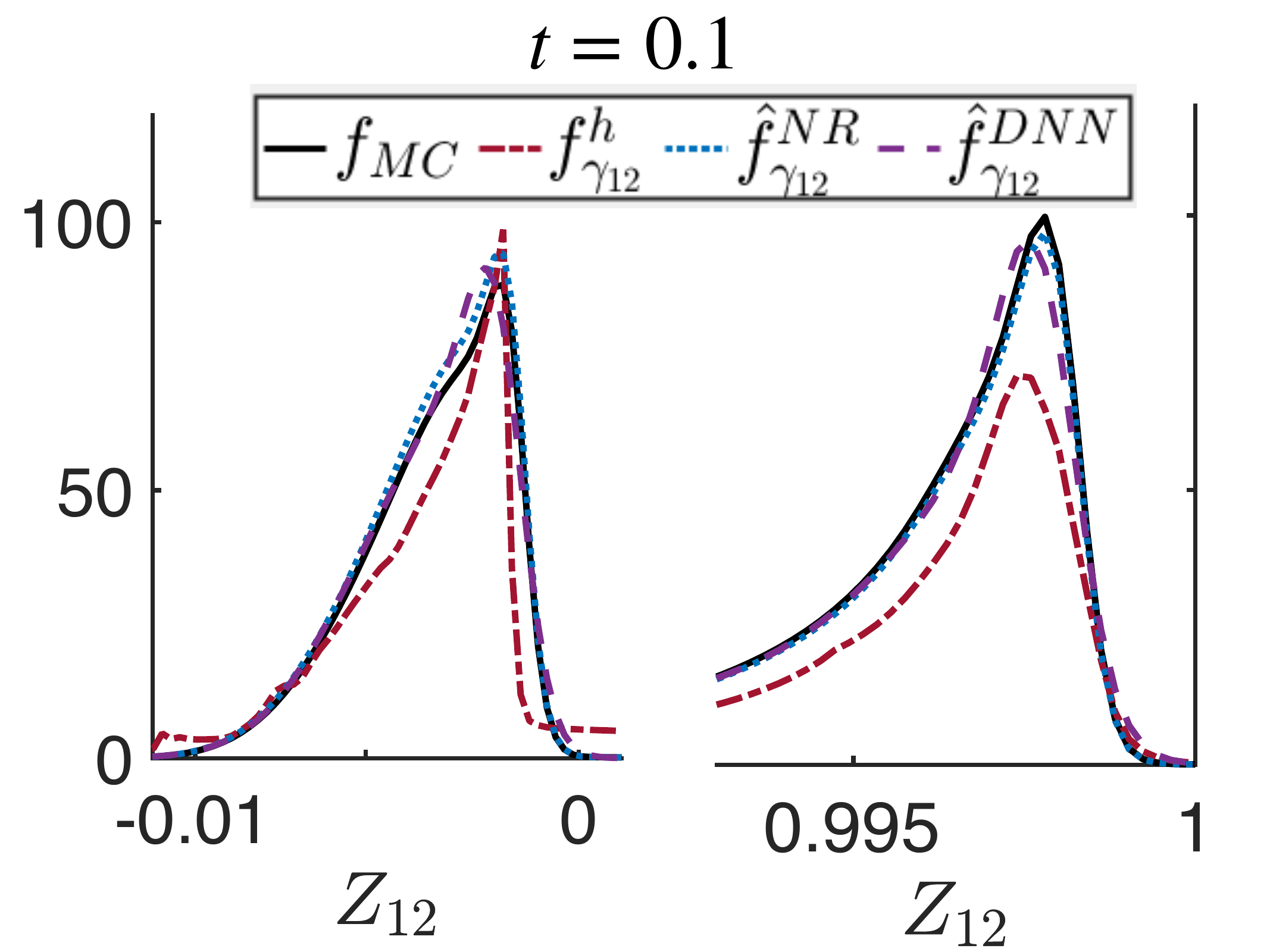}
	\end{subfigure}%
	\begin{subfigure}{.33\textwidth}
  		\centering
 	 \includegraphics[width=1\linewidth]{./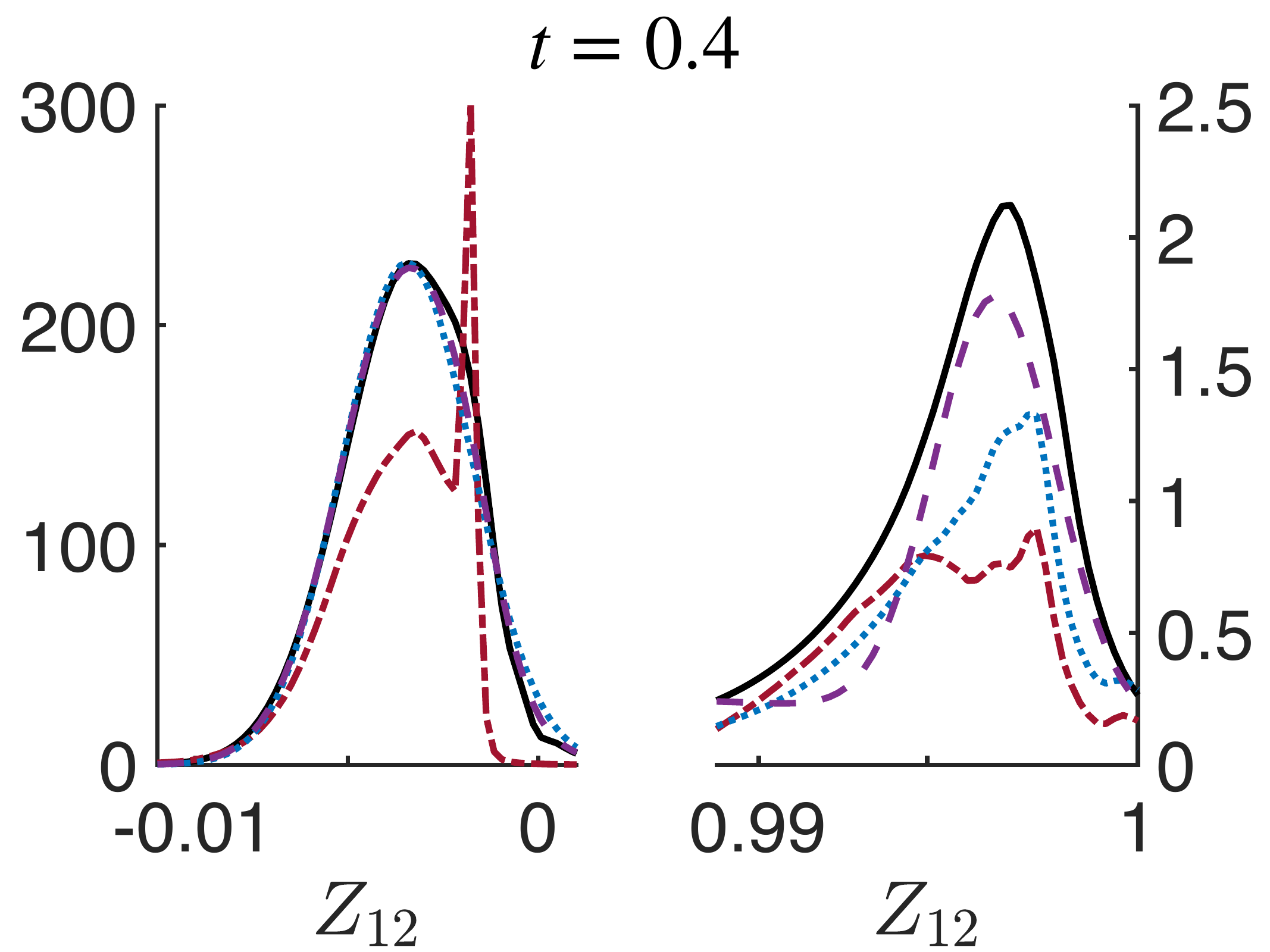}
	\end{subfigure} 
\caption{Comparison of the yardstick $f_\text{MC}$ (solid black), the homogeneous solution $f_{\gamma_{12}}^\text{h}$ (dot-dashed red), the nudged observer $\hat f_{\gamma_{12}}^\text{NR}$ (dotted blue), and the DNN observer $\hat f_{\gamma_{12}}^\text{DNN}$ (dashed purple) for $\nu=10^3$ and $N_\text{MC}^\text{tr} = 2\times 10^3$ at times $t=0.005$ (left), $0.1$ (middle), and $0.4$, (right). Similar to the PDFs in \Cref{fig:pow_sol}, the phase space is restricted near the boundaries to emphasize the bimodal behavior at the latter two times. Moreover, at $t = 0.4$ (right), the modes are given on two scales, revealing that the right mode shrinks, but does not vanish.} 
\label{fig:powpdfs}
\end{figure}

While small at late times, the right mode of $f_{\g_{12}}$ does not vanish, even if the final time $T_f$ is increased significantly. Hence, there is a nonzero, albeit small, probability that the line does not trip. If one desires a highly refined estimate of this probability, observer discrepancy at the RoPDF's right mode must be reduced. When the sparsity level $\nu$ is fixed, the only surefire way to improve the nudged observer is to increase $N_\text{MC}^\text{tr}$, i.e., the amount of training data. This is also true for the DNN observer, but to a lesser extent by means of normalization. Unlike the linear RODE in \Cref{sec:lin}, there is no clear-cut approach to normalizing the RoPDFs and observations locations for improved training. However, since the CDFs corresponding to the observations  $f_\text{MC}^{\text{tr},\nu}$ were computed for $\hat\rmc$ estimation, for each observation time, we multiply the spatial grid by these CDFs. We then shift, scale, and apply the square root transform to obtain new observation locations for training. This serves to disperse the essential supports of the RoPDF modes, making them easier to learn. In addition to applying square root transforms to the RoPDFs, we also shift and scale them, along with the observation times, so that the RoPDFs and spatiotemporal locations are all on the same scale. This approach to normalization considerably improves DNN estimates of the right mode at late times. Alternative approaches to normalization may yield better results, but none are perfect---the observer still depends on the quality of $f_\text{MC}^{\text{tr},\nu}$ and therefore $N_\text{MC}^\text{tr}$.

\Cref{fig:pow_err} (left) demonstrates the qualitative behavior to be expected of the learned defect solution $\hat f_{\g_{12}}^\text{d}$ associated with the DNN observer (dashed purple). The defect corresponding to the nudged observer (dotted blue), computed as $\hat f_{\gamma_{12}}^\text{NR} - f_{\gamma_{12}}^\text{h}$, is also plotted for reference. They are nonperiodic and exhibit steep gradients. As seen in the middle plot, the errors of $\hat f_{\g_{12}}^\text{NR}$ and $\hat f_{\g_{12}}^\text{DNN}$ against the yardstick solution are quite similar, with $\hat f_{\g_{12}}^\text{NR}$ performing slightly better at early times, i.e., during the initial transience. As previously discussed, DNN obsever error could possibly be improved to match that of nudging via alternative normalization. However, we contribute the nudged observer's better success to it's ability to dynamically overcome the highly nontrivial error distributions associated with these RoPDFs. We remark that, in this setting, the EnKF would likely perform poorly against the nudged observer due the errors being large and non-Gaussian.

\begin{figure}[t!] 
	\centering  
  	\includegraphics[width=.3285\linewidth]{./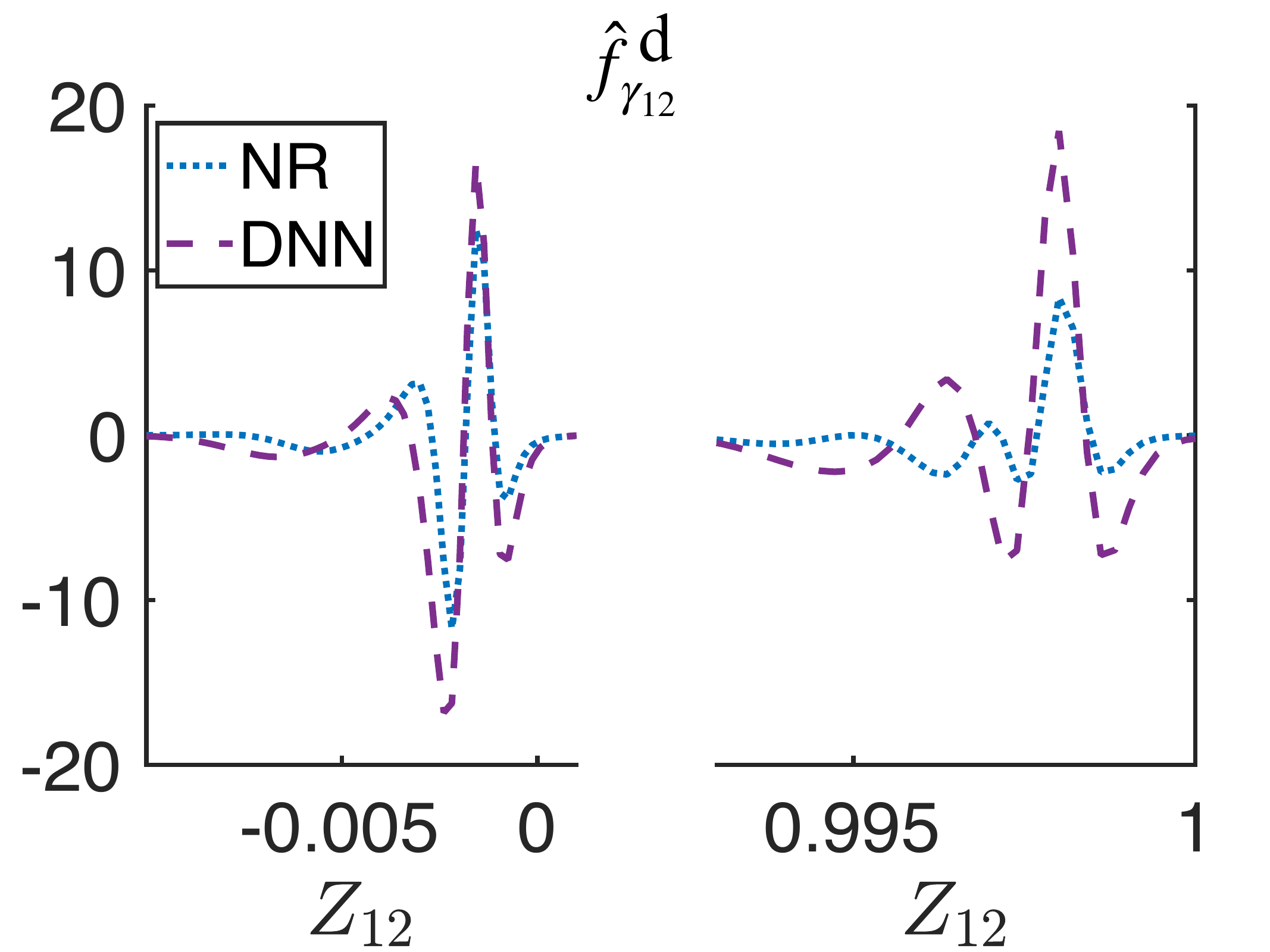}
    \includegraphics[width=.3285\linewidth]{./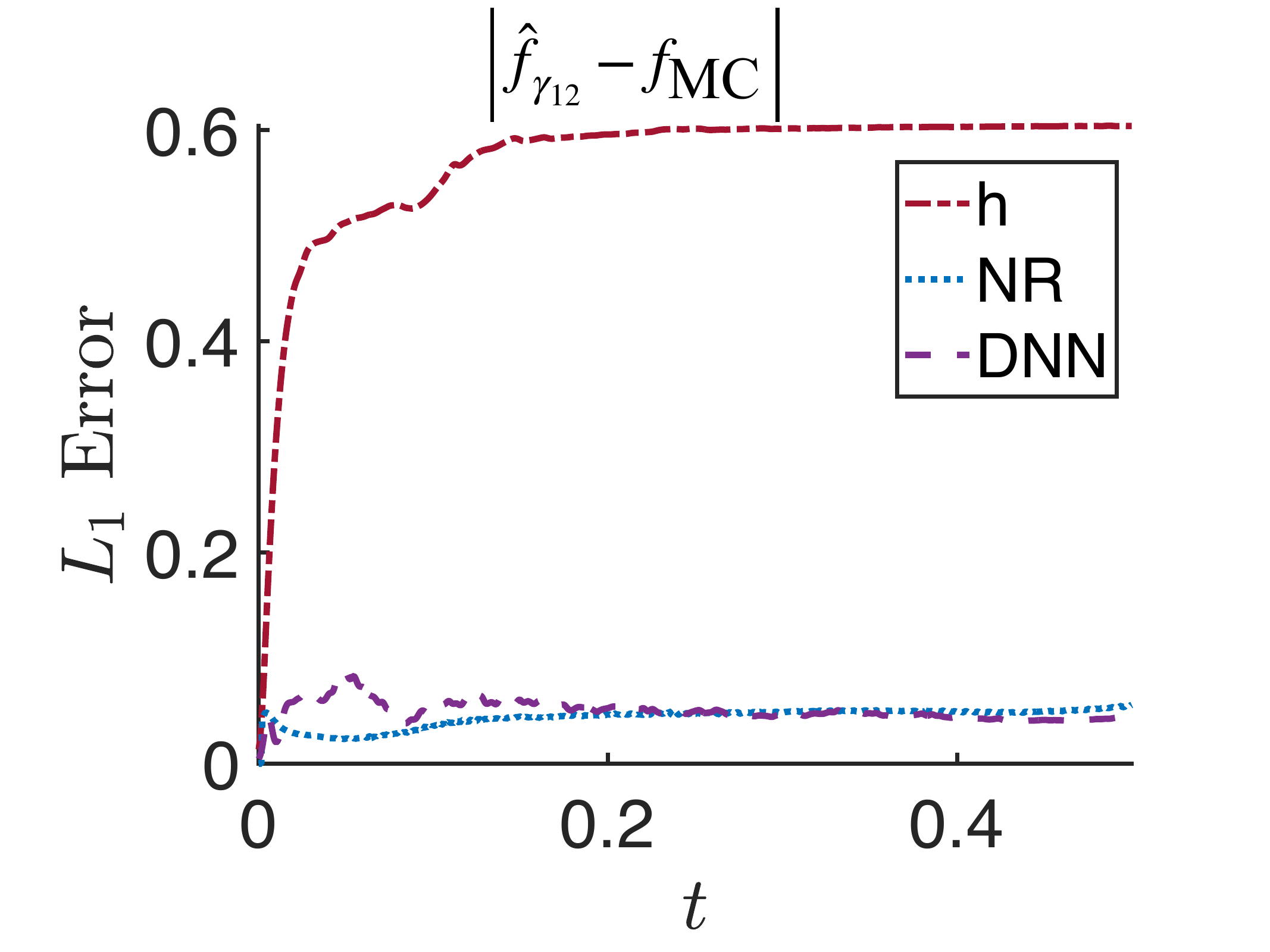}
    \includegraphics[width=.3285\linewidth]{./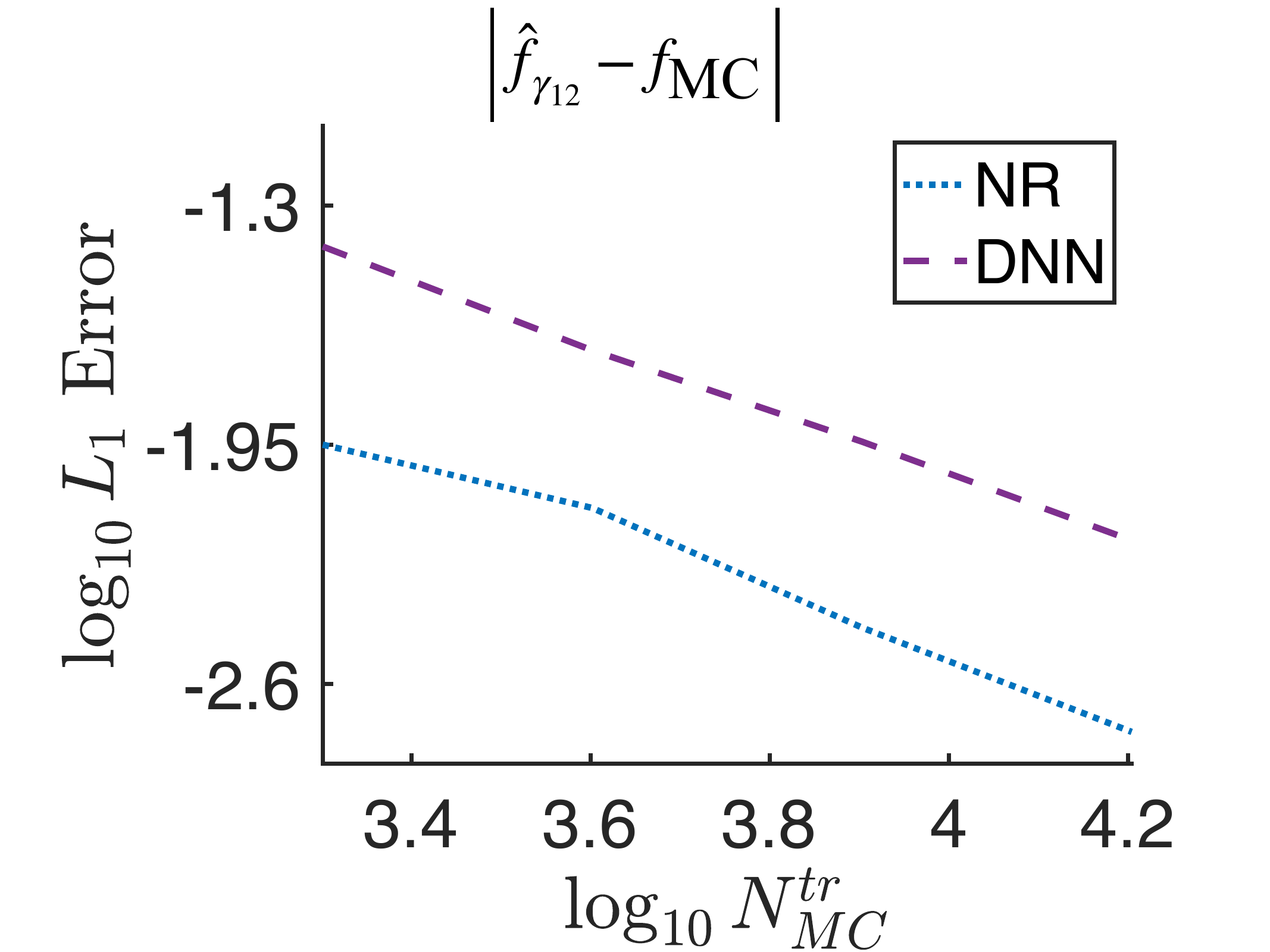}
\caption{(Left) The defect solutions $\hat{f}_{\gamma_{12}}^\text{d}$ for the DNN and nudged observers at time $t = 0.1$ with sparsity factor $\nu=10^3$ and $N_\text{MC}^\text{tr} = 2\times 10^3$ MC samples. The former is the (prediction of the) DNN \cref{mlp} while the latter is computed {\emph{ex post facto}} as $\hat f_{\gamma_{12}}^\text{NR} - f_{\gamma_{12}}^\text{h}$. (Middle) Temporal evolution of $L_1$ errors for the homogeneous solution, $f_{\gamma_{12}}^\text{h}$, the nudged observer $\hat f_{\gamma_{12}}^\text{NR}$, and the DNN observer $\hat f_{\gamma_{12}}^\text{DNN}$. (Right) Convergence rates, on a log-log scale, for the normalized spatiotemporal $L_1$ errors of the nudged and DNN observers as $N_\text{MC}^\text{tr}$ increases.}
\label{fig:pow_err}
\end{figure}

\Cref{fig:pow_err} (right) displays that even though the power systems dynamics are significantly more complex, stiff, and higher dimensional than the linear system from \Cref{sec:lin}, the RoPDF method obtains the same $\bo(1/N_\text{MC}^\text{tr})$ convergence as the number of MC realizations increases, demonstrating the method's robustness. Moreover, given that the yardstick MC solution requires $N_\text{MC}=10^5$ realizations of the stiff, $47$-dimensional RODE system (even with fast, adaptive KDE) and the RoPDF method needs fewer than $N_\text{MC}^\text{tr}=10^3$ realizations to achieve (less than) $1\%$ $L_1$ error, regardless of the assimilation approach, the method requires relatively few computational resources. For this application, the computational costs of numerically integrating the RoPDF equation, including the additional cost of DNN training, is but a small fraction of total costs. Therefore, comparing $N_\text{MC}^\text{tr}$ to $N_\text{MC}$ represents the computational speedup of the method sufficiently well. In particular, we see a speedup of at least two orders of magnitude (depending on desired error tolerance) of the RoPDF method compared to the MC approach.

\begin{remark} \label{rmk:mode}
Given the ``spikiness" of $f_{\g_{12}}$, i.e., its modes having small essential support, a RoCDF formulation is likely a better approach for the DNN observer. The CDFs have nicer regularity than the PDFs, making them easier to learn with less manual normalization/tuning. However, our presentation is limited to RoPDFs since the literature has largely focused on general PDF methods for RODEs and Langevin-type systems driven by colored noise. 
\end{remark}


\section{Conclusions}
\label{sec:conclusions} 

In this work, we have developed a physics-informed framework for studying uncertainty propagation of physical quantities of interest in high-dimensional and multiscale stochastic dynamical systems. In particular, we presented a derivation of an exact RoPDF equation and a regression-based approach to closures, enabling the characterization of full probabilistic profiles at all times with low computational complexity. Furthermore, we introduced two physics-informed data assimilation procedures to address issues arising in stiff systems, namely nudging/Newtonian relaxation and deep neural networks, which assimilates in low-fidelity observations at sparse observation times with negligible cost, improving density estimates. Finally, we showed the accuracy of our method on characterizing uncertainty in both a synthetic stiff linear system and an at-scale power system cascading failure model using IEEE case data. The results of our method demonstrate promising and practical uses in the prediction of complex stochastic phenomena. 

Several challenges and opportunities arise following this work. Firstly, convergence rates of the RoPDF method are dependent on three factors: (1) error from KDE, (2) truncation error from PDE scheme and (3) estimation error from regression functions. A precise characterization of solution accuracy can be analyzed from a learning theory perspective, particularly for the effect of noise distributions and overfitting. Secondly, we performed preliminary experimentation of using DNNs for the discovery of model defects. We believe that DNNs could have strong extrapolation power once trained with more sophisticated architectures. To name a few, Fourier-based DNNs are known to capture stiff dynamics well. Additionally, long short-term memory networks can be used to impose temporal ordering, which is more suitable for learning from (sparse) time-series observations. Particularly, when observations cease to exist, the design of reliable DNN extrapolation is necessary, which was beyond the scope of this study. Finally, the natural extension to uncertainty quantification for vector-valued QoIs is of great practical interest, such as rare-event probability estimations for multiple line failures in the power system model \cref{pham2}. However, the issue of dimensionality returns when the reduced state space itself is high-dimensional, which may potentially be resolved via structured low-complexity methods, such as tensor-networks, flow-based generative models, and/or a combination of such strategies where an initial product measure can be formed from marginal densities solved using the 1D RoPDF method, and then optimized to approach the correct reduced-order joint density. 


\bibliographystyle{siamplain}
\bibliography{references}


\appendix

\section{Nudging Convergence}
\label{app:nr}

For general random hyperbolic conservation laws, the method of distributions is formulated in a fashion similar to that of \Cref{thm} and \cite[Eq. 3]{Maltba2022} for RoPDF and joint PDF equations (respectively) corresponding to the RODE \cref{ivp2}. The kinetic description of the hyperbolic system is precisely the deterministic equation for the ``raw PDF'' $\Pi$. However, when the governing random PDE exhibits shocks, the method of distributions for $\Pi$ breaks down at singularities. This can be overcome by partitioning the domain and tracking shocks analytically, which was done in \cite{Alawadhi_2018,Wang_2013} for the water-hammer and Buckley-Leverett equations, respectively. However, analytically tracking shocks is rarely possible for general nonlinear hyperbolic PDEs. Instead, the kinetic defect term/collision operator $\M$ may be introduced as a source function in the raw PDF equation, incorporating all information regarding discontinuities. When the hyperbolic system exhibits smooth solutions, $\M$ is unique and identically zero. Otherwise, it can be written as the partial derivative of what is known as the kinetic entropy defect measure---it is exact, albeit generally unknown {\emph{a priori}}. Learning this defect in the CDF equations of nonlinear scalar conservation laws with random initial data was the focus of \cite{Boso_2020}, which largely motivated our extension to the setting of reduced-order equations. 

It was shown in \cite{Boulanger_2015} that nudging hyperbolic conservation laws at the kinetic level does not perturb the stability of the macroscopic system, which is beneficial for establishing strong convergence. In our setting of RoPDF equations, the conservation law \cref{ro-clpi} for $\Pi_{x_k}$ is a kinetic description and is exact. Therefore, the defect $\M$ vanishes and (scalar) nudging ensures that the corresponding observer $\hat\Pi_{x_k}$ converges globally and exponentially in $L_1$ to $\Pi_{x_k}$ with rate $\lambda>0$ when $H(\Pi_{x_k}) \equiv \Pi_{x_k}$, i.e., when observations are complete and exact. By virtue of the triangle inequality, 
\begin{equation}
    \lvert\lvert\hat f_{x_k} - f_{x_k}\rvert\rvert_1 
	\le \big\langle\lvert\lvert \hat \Pi_{x_k} - \Pi_{x_k} \rvert\rvert_1\big\rangle,
\end{equation}
the nudged observer $\hat f_{x_k}$ corresponding to the exact RoPDF equation enjoys the same global convergence to $f_{x_k}$, i.e., the solution to \cref{RoPDFsep}. In the case of temporally discrete observations, i.e., when the observations are sparse in time and complete in space, global convergence cannot be obtained. However, if the observations are interpolated over finite time intervals of length $T_w>0$ via the correction term
\[
	\lambda \sum_{l\in I} \phi_{T_w}(t-t_{m_l})\left(\Pi_{x_k}(X_k,t_{m_l})-\hat\Pi_{x_k}(X_k,t_{m_l})\right),
\] 
then, for any given time $T>0$, the nudged kinetic observer has bounded $L_1$ convergence: 
\begin{equation} \label{eq:l1con}
\lvert\lvert \hat \Pi_{x_k}(X_k,T) - \Pi_{x_k}(X_k,T) \rvert\rvert_1 \le C_0\text{e}^{\lambda L} + T_w\mathcal{I}(T),
\end{equation}
where $L$ is the number of time steps in $[0,T-T_w]$, $C_0$ is a constant depending on the initial condition, and $\mathcal{I}$ is a convergence-rate dependent quantity. Taking the ensemble mean provides a $\lambda$-dependent convergence bound for nudging the exact RoPDF equation \cref{RoPDFsep}. When the observations of $\Pi_{x_k}$ are noisy, under sufficient regularity conditions, one obtains a strong upper bound on the observation error in a homogeneous Sobolev norm and an optimal (scalar) nudging coefficient $\lambda>0$ (see \cite{Boulanger_2015}). 

In our nudging formulation, we have replaced the conditional expectations $\rmc_i$ with smooth estimators $\hat\rmc_i$, meaning that the kinetic defect term does not vanish. For exact but temporally sparse observations, this amounts to adding the term $\sup_{0<t\le T}\lvert\lvert \mathcal M(X_k,t)\rvert\rvert_1/\lambda$ to the bound in \cref{eq:l1con}. However, the more concerning issue is that we have introduced observation noise at the macroscopic level (via the low-fidelity estimates $f_\text{MC}^{\text{tr},\nu}$) rather than the kinetic level. Moreover, we are not aware of any existing literature that has addressed theoretical convergence in this setting. Since measurement noise often occurs on the macroscopic level in many applications, such results would be of great interest, and are indeed the focus of an ongoing body of work. In the meantime, we rely on the mounting empirical evidence for the practical nudging of ODEs/PDEs, including the new results for RoPDF equations in \Cref{sec:experiments}. 


\end{document}